\providecommand{\U}[1]{\protect\rule{.1in}{.1in}}
\newtheorem{theorem}{Theorem}
\newtheorem{definition}[theorem]{Definition}
\newtheorem{lemma}[theorem]{Lemma}
\newtheorem{proposition}[theorem]{Proposition}
\newenvironment{proof}[1][Proof]{\noindent\textbf{#1.} }{\ \rule{0.5em}{0.5em}}
\begin{document}

\title{Symmetries and choreographies in families bifurcating from \\the polygonal relative equilibrium of the $n$-body problem}
\author{Renato Calleja\thanks{Instituto de Investigaciones en Matem\'{a}ticas Aplicadas y en Sistemas, Universidad Nacional Aut\'{o}noma de M\'{e}xico, calleja@mym.iimas.unam.mx}, Eusebius Doedel\thanks{Department of Computer Science, Concordia University, Montreal, Canada, doedel@cs.concordia.ca}, Carlos
Garc\'{\i}a-Azpeitia\thanks{Facultad de Ciencias, Universidad Nacional Aut\'{o}noma de M\'{e}xico, cgazpe@ciencias.unam.mx}}
\maketitle

\begin{abstract}
We use numerical continuation and bifurcation techniques in a boundary value
setting to follow Lyapunov families of periodic orbits. These arise from the
polygonal system of $n$ bodies in a rotating frame of reference. When the
frequency of a Lyapunov orbit and the frequency of the rotating frame have a
rational relationship then the orbit is also periodic in the inertial frame.
We prove that a dense set of Lyapunov orbits, with frequencies satisfying a
diophantine equation, correspond to choreographies. We present a sample of the
many choreographies that we have determined numerically along the Lyapunov
families and along bifurcating families, namely for the cases $n=4,~6,~7,~8$,
and $9$. We also present numerical results for the case where there is a
central body that affects the choreography, but that does not participate in
it. Animations of the families and the choreographies can be seen at the link
below\footnote{\texttt{http://mym.iimas.unam.mx/renato/choreographies/index.html}%
}.

\end{abstract}


\section*{Introduction}

The study of $n$ equal masses that follow the same path has attracted much
attention in recent years. The first solution that differs from the classical
Lagrange circular one was discovered numerically by C.~Moore in 1993
\cite{Mo93}, where three bodies follow one another around the now famous
figure-eight orbit. This orbit was located by minimizing the action among
symmetric paths. Independently in \cite{ChMo00}, Chenciner and Montgomery
(2000) gave a rigorous mathematical proof of the existence of this orbit, by
minimizing the action over paths that connect a colinear and an isosceles
configuration. Such solutions are now commonly known as \textquotedblleft
choreographies\textquotedblright, after the work in \cite{Si00}, where
C.~Sim\'{o} presented extensive numerical computations of choreographies for
many choices of the number of bodies.

The results in \cite{ChMo00} mark the beginning of the development of
variational methods, where the existence of choreographies can be associated
with the problem of finding critical points of the classical action of the
Newton equations of motion. The main obstacles encountered in the application
of the principle of least action are the existence of paths with collisions,
and the lack of compactness of the action. In \cite{FeTe04}, Terracini and
Ferrario (2004) applied the principle of least action systematically over
symmetric paths to avoid collisions, using ideas introduced by Marchal
\cite{Ma02}. For the discussion of these and other variational approaches we
refer to \cite{BaTe04,Ch03,Fe06,FePo08,TeVe07}, and references therein.

Another way to obtain choreographies is by using continuation methods.
Chenciner and F\'{e}joz (2009) pointed out in \cite{ChFe08} that
choreographies appear in dense sets along the Vertical Lyapunov families that
arise from $n$ bodies rotating in a polygon; see also
\cite{ChFe05,GaIz13,Ma00}. The local existence of the Vertical Lyapunov
families is proven in \cite{ChFe08} using the Weinstein-Moser theory. When the
frequency varies along the Vertical Lyapunov families then an infinite number
of choreographies exists; a fact established in \cite{ChFe08} for orbits close
to the polygon equilibrium, with $n\leq6$. While similar computations can be
carried out for other values of $n$, a general analytical proof that is valid
for all $n$ remains an open problem.

In \cite{GaIz13} C.~Garc\'{\i}a-Azpeitia and J. Ize (2013) proved the global
existence of bifurcating Planar and Vertical Lyapunov families, using the
equivariant degree theory from \cite{IzVi03}. The purpose of our current work
is to compute such global families numerically, as well as subsequently
bifurcating families. To explain our numerical results in a precise notational
setting we first recall some relevant results from \cite{GaIz13}.

The equations of motion of $n$ bodies of unit mass in a rotating frame are
given by
\begin{align}
\ddot{u}_{j}+2\sqrt{s_{1}}~i~\dot{u}_{j}  &  =s_{1}u_{j}-\sum_{i=1(i\neq
j)}^{n}\frac{u_{j}-u_{i}}{\left\Vert (u_{j},z_{j})-(u_{i},z_{i})\right\Vert
^{3}}~,\label{NE}\\
\ddot{z}_{j}  &  =-\sum_{i=1(i\neq j)}^{n}\frac{z_{j}-z_{i}}{\left\Vert
(u_{j},z_{j})-(u_{i},z_{i})\right\Vert ^{3}}~,\nonumber
\end{align}
where the $(u_{j},z_{j})\in\mathbb{C}\times\mathbb{R}$ are the positions of
the bodies in space, and $s_{1}$ is defined by%
\begin{equation}
s_{k}=\frac{1}{4}\sum_{j=1}^{n-1}\frac{\sin^{2}(kj\zeta/2)}{\sin^{3}%
(j\zeta/2)}~,\qquad\zeta=\frac{2\pi}{n}~. \label{S}%
\end{equation}
The circular, polygonal relative equilibrium consists of the positions
\begin{equation}
u_{j}=e^{ij\zeta},\qquad z_{j}=0\text{.} \label{RE}%
\end{equation}
The frequency of the rotational frame is chosen to be $\sqrt{s_{1}}$, so that
the polygon (\ref{RE}) is an equilibrium of (\ref{NE}). The emanating Lyapunov
families have starting frequencies that are equal to the natural modes of
oscillation of the equilibrium (\ref{RE}). These Lyapunov families constitute
continuous families in the space of renormalized $2\pi$-periodic functions.
The \emph{global} property means that the norm or the period of the orbits
along the family tends to infinity, or that the family ends in a collision or
at a bifurcation orbit.

The theorem in \cite{GaIz13} states that for $n\geq6$ and for each integer $k$
such that
\[
3\leq k\leq n-3,
\]
the polygonal relative equilibrium has one \emph{global bifurcation of planar
periodic solutions} with symmetries%
\begin{equation}
u_{j}(t)=e^{ij\zeta}u_{n}(t+jk\zeta),\qquad u_{n}(t)=\bar{u}_{n}(-t)\text{.}
\label{PS}%
\end{equation}
Moreover, the proof in \cite{GaIz13} predicts solutions with $k=2$ or $n-2$ if
the linear equations at the polygonal equilibrium have normal modes
corresponding to these symmetries. In fact, three cases occur for different
values of $n$: for $n=4,5,6$ there are no solutions with $k=2$ or $n-2$, for
$n=7,8,9$ there are two solutions with $k=2$ and no solutions with $k=$ $n-2$,
and for $n\geq10$ there is one solution with $k=2$ and one with $k=$ $n-2$.

In the case of spatial Lyapunov families the eigenvalues of the linearized
system of equations are given explicitly by $i\sqrt{s_{k}}$, for
$k=1,...,n-1$; see \cite{ChFe08} and \cite{GaIz13}. The eigenvalues
$i\sqrt{s_{k}}$ are resonant due to the fact that $s_{n-k}=s_{k}$ for $1\leq
k<n/2$. Moreover, the first eigenvalue $i\sqrt{s_{1}}$ is resonant with the
triple planar eigenvalue $i\sqrt{s_{1}}$, and hence is highly degenerate.
These resonances can be dealt with using the equivariant degree theory in
\cite{IzVi03}.

The theorem in \cite{GaIz13} states that for $n\geq3$ and for each $k$ such
that
\[
1\leq k\leq n/2\text{,}%
\]
the polygonal relative equilibrium has one\emph{ global bifurcation of spatial
periodic solutions}, which start with frequency $\sqrt{s_{k}}$, have the
symmetry (\ref{PS}), as well as the symmetries
\begin{equation}
z_{j}(t)=z_{n}(t+jk\zeta), \label{SS}%
\end{equation}
and
\begin{equation}
u_{n}(t)=u_{n}(t+\pi),\qquad z_{n}(t)=-z_{n}(t+\pi). \label{8}%
\end{equation}
For example, for the case where $k=n/2$ and $n$ is even, we have $k\zeta=\pi$.
Then the symmetries (\ref{PS}), (\ref{SS}) and (\ref{8}) imply that%
\begin{align*}
u_{j}(t)  &  =e^{ij\zeta}u_{n}(t+j\pi)=e^{ij\zeta}u_{n}(t),\\
z_{j}(t)  &  =z_{n}(t+jk\zeta)=(-1)^{j}z_{n}(t).
\end{align*}
Solutions having these symmetries are known as Hip-Hop orbits, and have been
studied in \cite{BaCo06,DaTr83,MeSc93,TeVe07}.

Solutions with symmetries (\ref{PS}) and (\ref{SS}) are \textquotedblleft
traveling waves\textquotedblright\, in the sense that each body follows the
same path, but with a rotation and a time shift. The symmetries allow us to
establish that a dense set of solutions along the family are choreographies in
the inertial frame of reference.

We say that a planar or spatial Lyapunov orbit is $\ell:m$\emph{\ resonant} if
its period and frequency are
\[
T=\frac{2\pi}{\sqrt{s_{1}}}\left(  \frac{\ell}{m}\right)  \text{,\qquad}%
\nu=\sqrt{s_{1}}\frac{m}{\ell}\text{,}%
\]
where $\ell$ and $m$ are relatively prime, and such that%
\[
k\ell-m\in n\mathbb{Z}.
\]
In Theorem~\ref{proposition} we prove that $\ell:m$ resonant Lyapunov orbits
are choreographies in the inertial frame. Each of the integers $k$, $\ell$,
and $m$ plays a different role in the description of the choreographies.
Indeed, the projection of the choreography onto the $xy$-plane has winding
number $\ell$ around a center, and is symmetric with respect to the
$\mathbb{Z}_{m}$-group of rotations by $2\pi/m$. In addition the $n$ bodies
form groups of $d$-polygons, where $d$ is the greatest common divisor of $k$
and $n$.

Some choreographies wind around a toroidal manifold with winding numbers
$\ell$ and $m$, \textit{i.e.}, the choreography path is a $(\ell,m)$-torus
knot. In particular, such orbits appear in families that we refer to as "Axial
families", \textit{e.g.}, in Figure~\ref{fig08}. In \cite{BaTe04} and
\cite{Mo} different classifications for the symmetries of planar
choreographies have been presented. These classifications differ from the one
presented here since they are designed for choreographies found by means of a
variational approach. The nature of our approach is continuation and as such,
the winding numbers $\ell$ and $m$ appear in a natural manner in the
classification of the choreographies. Therefore, our approach presents
complementary information not available with variational methods. We note that
for other values of $\ell$ and $m$ the orbits of the $n$ bodies in the
inertial frame are also closed, but consist of multiple curves, called
``\textit{multiple choreographic solutions}'' in \cite{Ch03}.

We use robust and highly accurate boundary value techniques with adaptive
meshes to continue the Lyapunov families. An extensive collection of python
scripts that reproduce the results reported in this article for a selection of
values of $n$ will be made freely available. These scripts control the
software AUTO to carry out the necessary sequences of computations. Similar
scripts will be available for related problems, including an $n$ vortex
problem and a periodic lattice of Schr\"odinger sites.

In \cite{ChFe08} the numerical continuation of the Vertical Lyapunov families
is implemented as local minimizers in subspaces of symmetric paths. Presumably
not all families are local minimizers restricted to subspaces. One advantage
of our procedure is that it allows the numerical continuation of all Planar
and Vertical Lyapunov families that arise from simple eigenvalues. The
systematic computation of periodic orbits that arise from eigenvalues of
higher multiplicity remains under investigation. Previous numerical work has
established the existence of many choreographies; see for example
\cite{ChGe02}. Computer-assisted proofs of the existence of choreographies
have been given in, for example, \cite{KaZg03} and \cite{KaSi07}. It would be
of interest to use such techniques to mathematically validate the existence of
some of the choreographies in our article. The figure-8 orbit is still the
only choreography known to be stable \cite{KaSi07}, and so far we have not
found evidence of other stable choreographies.

In Section~1 we prove that a dense set of orbits along the Lyapunov families
corresponds to choreographies. In Section 2 we describe the numerical
continuation procedure used to determine the periodic solution families, and
in Section 3 we give examples of numerically computed Lyapunov families and
some of their bifurcating families. In Section 4 we provide a sample of the
choreographies that appear along Planar Lyapunov families. Section 5 presents
choreographies along the Vertical Lyapunov families and along their
bifurcating families. In particular, a family of axially symmetric orbits
forms a connection between a Vertical family and a Planar family.
Choreographies along such tertiary Planar families are referred to as
``unchained polygons'' in \cite{ChFe08}.

In Section 6 we present results for a similar configuration, namely the
Maxwell relative equilibrium, where a central body is added at the center of
the $n$-polygon. This configuration has been used as a model to study the
stability of the rings of Saturn, as established in \cite{Mo92} and in
\cite{GaIz13,VaKo07,Ro00} for $n\geq7$. Using a similar approach as in the
earlier sections, we determine solutions where $n$ bodies of equal mass $1$
follow a single trajectory, but with an additional body of mass $\mu$ at or
near the center. While this extra body does not participate in the
choreography, it does affect its structure, and its stability properties. We
also present Vertical Lyapunov families that bifurcate from a non-circular,
polygonal equilibrium, whose solutions have symmetries that correspond to
\textquotedblleft standing waves\textquotedblright, and which do not give rise
to choreographies in the inertial frame.

\section{Choreographies and Lyapunov Families}

In this section we prove that there are Lyapunov orbits of the $n$ body
problem that correspond to choreographies in the inertial frame of reference.

\begin{lemma}
\label{Lemma}Let
\[
\Omega=\frac{1}{n}\left(  k\frac{\sqrt{s_{1}}}{\nu}-1\right)  \text{.}%
\]
Then in the inertial frame of reference, with period scaled to $2\pi$, the
Planar Lyapunov orbits satisfy
\[
q_{j}(t)=e^{-ij(2\pi)\Omega}q_{n}(t+jk\zeta).
\]

\end{lemma}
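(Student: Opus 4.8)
The plan is to relate the inertial-frame positions $q_j$ to the rotating-frame positions $u_j$ by undoing the rotation of the frame, and then to invoke the traveling-wave symmetry (\ref{PS}). Since the frame rotates with angular velocity $\sqrt{s_1}$, in physical time $\tau$ the two descriptions are linked by $q_j(\tau)=e^{\,i\sqrt{s_1}\,\tau}\,u_j(\tau)$; one can confirm the sign and magnitude of this factor directly from the Coriolis and centrifugal terms in (\ref{NE}). Because the Lyapunov orbit has frequency $\nu$ and we renormalize its period to $2\pi$, I would pass to the rescaled time $t=\nu\tau$, under which $u_j$ is $2\pi$-periodic and the frame transformation reads
\[
q_j(t)=e^{\,i(\sqrt{s_1}/\nu)\,t}\,u_j(t).
\]

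First I would substitute the symmetry $u_j(t)=e^{ij\zeta}u_n(t+jk\zeta)$ from (\ref{PS}) into this expression, giving $q_j(t)=e^{\,i(\sqrt{s_1}/\nu)t}\,e^{ij\zeta}\,u_n(t+jk\zeta)$. Next, writing the same frame transformation at the shifted time and inverting it, $u_n(t+jk\zeta)=e^{-i(\sqrt{s_1}/\nu)(t+jk\zeta)}\,q_n(t+jk\zeta)$, I would eliminate $u_n$ in favor of $q_n$. The two factors $e^{\pm i(\sqrt{s_1}/\nu)t}$ cancel, leaving
\[
q_j(t)=e^{\,ij\zeta(1-k\sqrt{s_1}/\nu)}\,q_n(t+jk\zeta).
\]

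To finish, I would use $\zeta=2\pi/n$ together with the definition of $\Omega$, from which $n\Omega=k\sqrt{s_1}/\nu-1$ and hence $1-k\sqrt{s_1}/\nu=-n\Omega$, so that the exponent becomes $ij(2\pi/n)(-n\Omega)=-ij(2\pi)\Omega$ and the claimed identity follows at once. The only real obstacle here is bookkeeping: one must keep the physical time $\tau$ and the renormalized time $t$ straight, and fix the direction and rate of the frame rotation exactly, since an error in either the sign of $\sqrt{s_1}$ or in the rescaling would propagate into a wrong coefficient in $\Omega$. There is no analytic difficulty beyond this; once the frame transformation and the time rescaling are pinned down correctly, the result is a short algebraic simplification that uses the symmetry (\ref{PS}) exactly once.
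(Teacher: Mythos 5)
Your proposal is correct and follows essentially the same route as the paper's proof: write $q_j(t)=e^{it\sqrt{s_1}/\nu}u_j(t)$ in renormalized time, substitute the symmetry (\ref{PS}), invert the frame transformation at the shifted time to replace $u_n(t+jk\zeta)$ by $e^{-i(t+jk\zeta)\sqrt{s_1}/\nu}q_n(t+jk\zeta)$, and simplify the resulting exponent $ij\zeta\left(1-k\sqrt{s_1}/\nu\right)=-ij\zeta n\Omega=-ij(2\pi)\Omega$ using the definition of $\Omega$ and $\zeta=2\pi/n$. The algebra and bookkeeping of the time rescaling match the paper's argument step for step.
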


\begin{proof}
: In the inertial frame the solutions are given by%
\[
q_{j}(t)=e^{i\sqrt{s_{1}}t}u_{j}(\nu t),
\]
where $\nu$ is the frequency and $T=2\pi/\nu$ is the period. Reparametrizing
time the solution becomes $q_{j}(t)=e^{it\sqrt{s_{1}}/\nu}u_{j}(t)$, where
$u_{j} $ is the $2\pi$-periodic solution with the symmetries (\ref{PS}). We
have%
\[
q_{j}(t)=e^{it\sqrt{s_{1}}/\nu}u_{j}(t)=e^{it\sqrt{s_{1}}/\nu}e^{ij\zeta}%
u_{n}(t+jk\zeta)\text{.}%
\]
Since
\[
q_{n}(t+jk\zeta)=e^{i(t+jk\zeta)\sqrt{s_{1}}/\nu}u_{n}(t+jk\zeta)\text{,}%
\]
it follows that
\[
q_{j}(t)=e^{it\sqrt{s_{1}}/\nu}e^{ij\zeta}\left(  e^{-i(t+jk\zeta)\sqrt{s_{1}%
}/\nu}q_{n}(t+jk\zeta)\right)  =e^{-ij\zeta n\Omega}q_{n}(t+jk\zeta).
\]

\end{proof}

In particular, if $\Omega\in\mathbb{Z}$ then the Lyapunov solutions satisfy
\begin{equation}
q_{j}(t)=q_{n}(t+jk\zeta)\text{,} \label{qn}%
\end{equation}
and are choreographies. In fact, planar choreographies exists for any rational
number $\Omega=p/q$ where $q$ is relatively prime to $n$.

\begin{proposition}
\label{PC}If $\Omega=p/q$, with $q$ relatively prime to $n$, then
\begin{equation}
q_{j}(t)=q_{n}(t+j\left(  1_{n}k\zeta\right)  )\text{,}%
\end{equation}
where $1_{n}=1$ mod $n$. The solution $q_{n}(t)$ is $2\pi m$-periodic, where
$m$ and $\ell$ are relatively prime such that%
\begin{equation}
\frac{\ell}{m}=\frac{np+q}{kq}\text{.} \label{lm}%
\end{equation}

\end{proposition}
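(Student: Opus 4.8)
The plan is to start from Lemma~\ref{Lemma} and convert the spatial rotation $e^{-ij2\pi\Omega}$ into a pure time shift of the single orbit $q_n$, which is exactly what is needed to exhibit a choreography. Write $\omega=\sqrt{s_1}/\nu$, and recall that in the reparametrized inertial frame $q_n(t)=e^{i\omega t}u_n(t)$ with $u_n$ being $2\pi$-periodic by (\ref{PS}); hence for every integer $a$ one has the rotation--shift identity
\[
q_n(t+2\pi a)=e^{2\pi i\omega a}\,q_n(t).
\]
I would first dispose of the periodicity claim and of (\ref{lm}): the relation $\Omega=(k\omega-1)/n=p/q$ solves to $\omega=(np+q)/(kq)$, and writing this fraction in lowest terms as $\ell/m$ is precisely the content of (\ref{lm}); since $\omega m=\ell\in\mathbb{Z}$, the identity above gives $q_n(t+2\pi m)=q_n(t)$, so $q_n$ is $2\pi m$-periodic.

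For the choreography I would look for $1_n$ of the form $1_n=1+ns$ with $s\in\mathbb{Z}$, so that indeed $1_n\equiv 1\pmod n$ as required. The merit of this ansatz is that, using $n\zeta=2\pi$,
\[
j\,(1_n k\zeta)-jk\zeta=j\,n s\,k\zeta=2\pi\,jsk
\]
is an integer multiple of $2\pi$, so the rotation--shift identity yields $q_n(t+j\,1_n k\zeta)=e^{2\pi i\omega jsk}\,q_n(t+jk\zeta)$. Comparing with Lemma~\ref{Lemma}, the desired equality $q_j(t)=q_n(t+j\,1_n k\zeta)$ will hold for every $j$ as soon as a single integer $s$ can be chosen with $e^{2\pi i\omega sk}=e^{-i2\pi\Omega}$, i.e. as soon as $\omega sk+\Omega\in\mathbb{Z}$; raising to the $j$-th power then propagates the identity to all bodies.

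The hard part is the arithmetic solvability of this last condition, and this is where the hypothesis $\gcd(q,n)=1$ enters. Taking $p/q$ in lowest terms, I would first show $\gcd(np+q,q)=\gcd(np,q)=\gcd(n,q)=1$, which forces the denominator $q$ to divide $m$; more precisely, with $g=\gcd(np+q,k)$ and $h=k/g$ one gets $k=gh$ and $m=hq$. Substituting $\tfrac{p}{q}=\tfrac{ph}{m}$ turns $\omega sk+\Omega\in\mathbb{Z}$ into the linear congruence $\ell s k\equiv -ph\pmod m$. Since $\gcd(\ell,m)=1$ and $\gcd(\ell k,m)=\gcd(k,m)=h\gcd(g,q)=h$ (again using $\gcd(q,n)=1$ to get $\gcd(g,q)=1$), this congruence is solvable precisely because $h$ divides $ph$; producing such an $s$ completes the proof. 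I expect the main obstacle to be exactly this number-theoretic bookkeeping---establishing $q\mid m$ and evaluating $\gcd(k,m)$---rather than the dynamical content, which is entirely contained in the single rotation--shift identity.
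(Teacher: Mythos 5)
Your proposal is correct, but it proves the proposition by a mechanism different from the paper's. The paper exploits the periodicity of the body \emph{index}: since $q_{j}=q_{j+n\ell}$, it applies Lemma~\ref{Lemma} with $j$ replaced by $j1_{n}$, where $1_{n}=q^{\ast}q$ and $q^{\ast}$ is the inverse of $q$ modulo $n$; then the phase $e^{-i2\pi j1_{n}p/q}=e^{-i2\pi jq^{\ast}p}=1$ vanishes identically and the choreography relation drops out at once, with the $2\pi m$-periodicity obtained exactly as you obtain it. You instead keep the index $j$ fixed and exploit the quasi-periodicity in \emph{time}, $q_{n}(t+2\pi a)=e^{2\pi i\omega a}q_{n}(t)$, converting the extra shift $j(1_{n}-1)k\zeta$ into a phase and solving a congruence for $s$; this is in fact the same mechanism the paper uses later in the proof of Theorem~\ref{proposition} (there with $\ell^{\ast}$, the $m$-modular inverse of $\ell$), so your route unifies the two proofs, which is a genuine merit. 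Two remarks. First, your number-theoretic bookkeeping is correct ($q\mid m$, $\gcd(k,m)=h$, solvability since $h\mid ph$) but much heavier than necessary: writing $\omega k=(np+q)/q$ and $\Omega=p/q$ gives directly
\begin{equation*}
\omega sk+\Omega=s+\frac{p(ns+1)}{q}\,,
\end{equation*}
so the condition is $q\mid p(ns+1)$, i.e.\ (with $\gcd(p,q)=1$) the single congruence $ns\equiv-1\pmod q$, solvable precisely because $\gcd(n,q)=1$; note this says $1_{n}=1+ns\equiv0\pmod q$, so your $1_{n}$ and the paper's $q^{\ast}q$ are the same residue modulo $nq$, determined by $1_{n}\equiv1\pmod n$ and $1_{n}\equiv0\pmod q$. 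Second, you reduce to $p/q$ in lowest terms; that is a harmless normalization (it changes neither the hypothesis $\gcd(q,n)=1$ nor the fraction in (\ref{lm})), but it is worth stating explicitly, since the paper's own argument never needs it.
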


\begin{proof}
: If $\Omega=p/q$, the solution satisfies
\begin{equation}
q_{j}(t)=e^{-i2\pi jp/q}q_{n}(t+jk\zeta).
\end{equation}
Since $n$ and $q$ are relatively prime, we can define $q^{\ast}$ as the
modular inverse of $q$. Setting $1_{n}=q^{\ast}q$, there is an $\ell$ such
that $j 1_{n}=j+n\ell$ for any $j$. Then we have%
\begin{equation}
q_{j}(t)=q_{j+n\ell}(t)=e^{-i2\pi(j 1_{n}p/q}q_{n}(t+j(1_{n}k\zeta
))=e^{-i2\pi(jq^{\ast}p)}q_{n}(t+j(1_{n}k\zeta))\text{.}%
\end{equation}
Since
\[
\frac{\sqrt{s_{1}}}{\nu}=\frac{n\Omega+1}{k}=\frac{np+q}{qk}=\frac{\ell}%
{m}\text{,}%
\]
it follows that $e^{it\sqrt{s_{1}}/\nu}$ is $2\pi m$-periodic, and since
$u_{n}(t)$ is $2\pi$-periodic, we also have that the function $q_{n}%
(t)=e^{it\sqrt{s_{1}}/\nu}u_{n}(t)$ is $2\pi m$-periodic. 

\end{proof}

\begin{proposition}
\label{SC}For $\Omega=p/q$, with $q$ and $n$ relatively prime, the spatial
Lyapunov solution is a choreography that satisfies
\begin{equation}
(q_{j},z_{j})(t)=(q_{n},z_{n})(t+j(1_{n}k\zeta))\text{,}%
\end{equation}
where $1_{n}=1$ mod $n$ and $(q_{j},z_{j})(t)$ is $2\pi m$-periodic.
\end{proposition}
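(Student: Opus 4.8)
The plan is to mirror the proof of Proposition~\ref{PC}, adapting it to the spatial case by keeping the extra vertical coordinate $z_j$ alongside the planar coordinate $q_j$. Recall that the spatial Lyapunov solutions carry not only the planar symmetry (\ref{PS}) but also the vertical symmetry (\ref{SS}), namely $z_j(t)=z_n(t+jk\zeta)$. The key observation is that, unlike the planar part, the vertical part $z_j$ is a real coordinate and is \emph{not} multiplied by the rotation factor $e^{ij\zeta}$; it transforms purely by the time shift $jk\zeta$. Consequently, when we pass to the inertial frame, the vertical coordinate is unaffected by the rotation $e^{i\sqrt{s_1}t}$ and remains simply $z_j(t)=z_n(\nu t + jk\zeta)$ before reparametrization.

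First I would set up the inertial-frame solution in both coordinates. The planar coordinate gives $q_j(t)=e^{i\sqrt{s_1}t}u_j(\nu t)$ exactly as in Lemma~\ref{Lemma}, while the vertical coordinate, having no rotating-frame correction, is simply $z_j(t)=z_j^{\text{body}}(\nu t)$, i.e.\ the vertical motion is the same in both frames up to the time scaling. After reparametrizing time to make the solutions $2\pi$-periodic in the body frame, I would apply Lemma~\ref{Lemma} to obtain $q_j(t)=e^{-ij(2\pi)\Omega}q_n(t+jk\zeta)$ for the planar part, and directly use (\ref{SS}) to obtain $z_j(t)=z_n(t+jk\zeta)$ for the vertical part.

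Next I would invoke the hypothesis $\Omega=p/q$ with $q$ relatively prime to $n$, and repeat verbatim the modular-arithmetic step from the proof of Proposition~\ref{PC}: defining $q^\ast$ as the modular inverse of $q$ modulo $n$, setting $1_n=q^\ast q$, and using the index shift $j 1_n = j + n\ell$ together with the $n$-periodicity of the index (so $q_{j+n\ell}=q_j$ and $z_{j+n\ell}=z_j$). The crucial point is that the planar phase factor collapses exactly as before, $e^{-i2\pi(j 1_n p/q)}=e^{-i2\pi(jq^\ast p)}=1$ since $jq^\ast p\in\mathbb{Z}$, while the vertical factor was trivial to begin with. This yields simultaneously
\[
q_j(t)=q_n(t+j(1_n k\zeta)),\qquad z_j(t)=z_n(t+j(1_n k\zeta)),
\]
so the combined vector $(q_j,z_j)(t)=(q_n,z_n)(t+j(1_n k\zeta))$, which is the asserted choreography relation.

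Finally, for the periodicity claim I would note that $(q_n,z_n)(t)$ being $2\pi m$-periodic follows exactly as in Proposition~\ref{PC}: the factor $e^{it\sqrt{s_1}/\nu}$ with $\sqrt{s_1}/\nu=\ell/m$ is $2\pi m$-periodic and $u_n$ is $2\pi$-periodic, so $q_n$ is $2\pi m$-periodic; the vertical component $z_n$ is $2\pi$-periodic and hence a fortiori $2\pi m$-periodic. I do not anticipate a genuine obstacle here, since the argument is essentially the planar proof carried along with an inert vertical coordinate; the only point requiring care is verifying that the vertical symmetry (\ref{SS}) indeed survives the frame change without acquiring any rotation factor, which is precisely why $z_j$ appears without the $e^{ij\zeta}$ prefactor that the planar coordinate carries.
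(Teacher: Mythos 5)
Your proof is correct and takes essentially the same route as the paper: the planar component is obtained exactly as in Proposition~\ref{PC}, and the vertical component is reduced to the symmetry (\ref{SS}) combined with the $2\pi$-periodicity of $z_{n}$, which is what allows the time shift $jk\zeta$ to be replaced by $j1_{n}k\zeta$. The only cosmetic difference is that you shift the body index $j\mapsto j1_{n}$ and invoke the symmetry at the shifted index, whereas the paper keeps the index $j$ and adjusts the time argument by a multiple of $2\pi$; both rest on the same observation.
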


\begin{proof}
: For the planar component of the spatial Lyapunov families we have
$q_{j}(t)=q_{n}(t+j1_{n}k\zeta)$, where $q_{n}(t)$ is $2\pi m$-periodic. We
have in addition that the spatial component $z_{n}$ is $2\pi$-periodic and
satisfies $z_{j}(t)=z_{n}(t+jk\zeta)$. Since $1_{n}=1$ mod $n$, we have
\[
z_{j}(t)=z_{n}(t+jk\zeta)=z_{n}(t+j1_{n}k\zeta),
\]
where $z_{n}(t)$ is also $2\pi m$-periodic. 

\end{proof}

For fixed $n$ the set of rational numbers $p/q$ such that $q$ and $n$ are
relatively prime is dense. If the range of the frequency $\nu$ along the
Lyapunov family contains an interval, then there is a dense set of rational
numbers $\Omega=p/q$ inside that interval. Hence there is an infinite number
of Lyapunov orbits that correspond to choreographies. To be precise, the
resonant Lyapunov orbit gives a choreography that has period
\[
mT=m\frac{2\pi}{\nu}=\frac{2\pi}{\sqrt{s_{1}}}\ell~\text{,}%
\]
where $T$ is the period of the resonant Lyapunov orbit. Furthermore, the
number $\ell$ is related to the number of times that the orbit of the
choreography winds around a central point. Rational numbers $p/q$, where $q$
is relatively prime to $n$, appear infinitely often in an interval, with $p$
and $q$ arbitrarily large. In such a frequency interval the infinite number of
rationals $p/q$ that correspond to choreographies give arbitrarily large
$\ell$ and $m$ as well. This gives rise to an infinite number of
choreographies, with arbitrarily large frequencies $\frac{2\pi}{\sqrt{s_{1}}%
}\ell$, and orbits of correspondingly increasing complexity.

Although the previous results give sufficient conditions for the existence of
infinitely many choreographies, there can be additional choreographies due to
the fact that the orbit of the choreography $q_{n}(t)$ has additional
symmetries by rotations of $2\pi/m$. We now describe these symmetries and the
necessary conditions.

\begin{definition}
We define a Lyapunov orbit as being $\ell:m$ resonant if it has period
\[
T_{\ell:m}=\frac{2\pi}{\sqrt{s_{1}}}\frac{\ell}{m}\text{,}%
\]
where $\ell$ and $m$ are relatively prime such that%
\[
k\ell-m\in n\mathbb{Z}\text{.}%
\]

\end{definition}

\begin{theorem}
\label{proposition}In the inertial frame an $\ell:m$ resonant Lyapunov orbit
is a choreography,%
\[
(q_{j},z_{j})(t)=(q_{n},z_{n})(t+j\tilde{k}\zeta)\text{,}%
\]
where $\tilde{k}=k-(k\ell-m)\ell^{\ast}$ with $\ell^{\ast}$ the $m$-modular
inverse of $\ell$. The projection on the $xy$-plane of the choreography is
symmetric by rotations of the angle $2\pi/m$ and winds around a center $\ell$
times. The period of the choreography is $m~T_{\ell:m}$.
\end{theorem}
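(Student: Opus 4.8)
The plan is to reduce everything to Lemma~\ref{Lemma} and then convert the rotation phase appearing there into a time shift, exploiting a hidden rotational symmetry of the single orbit $q_n$. First I would translate the resonance hypothesis into the value of $\Omega$. By definition an $\ell:m$ resonant orbit has period $T_{\ell:m}=\frac{2\pi}{\sqrt{s_1}}\frac{\ell}{m}$, so $\sqrt{s_1}/\nu=\ell/m$ and Lemma~\ref{Lemma} gives
\[
\Omega=\frac{1}{n}\Bigl(k\frac{\ell}{m}-1\Bigr)=\frac{k\ell-m}{nm}.
\]
The resonance condition $k\ell-m\in n\mathbb{Z}$ lets me write $k\ell-m=nr$ with $r\in\mathbb{Z}$, whence $\Omega=r/m$ and Lemma~\ref{Lemma} reads
\[
q_j(t)=e^{-i2\pi jr/m}\,q_n(t+jk\zeta).
\]

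The crux of the argument is that the phase $e^{-i2\pi jr/m}$ is itself a rotation that $q_n$ realizes through a time translation. Writing $q_n(t)=e^{it\ell/m}u_n(t)$ with $u_n$ being $2\pi$-periodic, a shift by an integer multiple of $2\pi$ produces a rigid rotation,
\[
q_n(t+2\pi s)=e^{i2\pi s\ell/m}\,q_n(t),\qquad s\in\mathbb{Z}.
\]
Because $\gcd(\ell,m)=1$, as $s$ runs over the integers the rotation angle $2\pi s\ell/m$ runs over all multiples of $2\pi/m$, which is exactly the claimed $\mathbb{Z}_m$ symmetry of the projected orbit. I would then choose the shift $s=-jr\ell^*$, where $\ell^*$ is the $m$-modular inverse of $\ell$, so that the induced rotation matches the phase: since $\ell\ell^*\equiv 1\pmod m$,
\[
e^{i2\pi s\ell/m}=e^{-i2\pi jr\ell\ell^*/m}=e^{-i2\pi jr/m}.
\]

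It remains to check the bookkeeping that identifies the time shift $2\pi s$ with $j\tilde{k}\zeta$. Using $n\zeta=2\pi$ and $k\ell-m=nr$, a direct computation gives
\[
j\tilde{k}\zeta=j\bigl(k-(k\ell-m)\ell^*\bigr)\zeta=jk\zeta-2\pi jr\ell^*=jk\zeta+2\pi s,
\]
so combining the previous two displays yields $q_n(t+j\tilde{k}\zeta)=e^{-i2\pi jr/m}q_n(t+jk\zeta)=q_j(t)$, the planar choreography relation. For the spatial component I would use that $z_n$ is genuinely $2\pi$-periodic, so the extra shift $-2\pi jr\ell^*$ (an integer multiple of $2\pi$) leaves it unchanged and $z_j(t)=z_n(t+jk\zeta)=z_n(t+j\tilde{k}\zeta)$. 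Finally the winding number $\ell$ follows by tracking the phase $e^{it\ell/m}$ over one full period $t\in[0,2\pi m]$, the factor $u_n$ contributing no net winding for orbits emanating from the equilibrium. The main obstacle is not any single estimate but the modular-arithmetic bookkeeping: one must verify simultaneously that $\tilde{k}$ is an integer, that the chosen shift $s$ converts the phase exactly, and that $j\tilde{k}\zeta$ and $jk\zeta$ differ by the correct integer multiple of $2\pi$; the identity $\tilde{k}\zeta=k\zeta-2\pi r\ell^*$ is what makes all three fit together, and notably the argument uses only Lemma~\ref{Lemma} and $\gcd(\ell,m)=1$, not the coprimality of $m$ and $n$ required in Propositions~\ref{PC} and~\ref{SC}.
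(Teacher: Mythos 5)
Your proposal is correct and follows essentially the same route as the paper's proof: both reduce the resonance condition to $\Omega=r/m$ via Lemma~\ref{Lemma}, use the rotation symmetry $q_n(t+2\pi s)=e^{i2\pi s\ell/m}q_n(t)$ together with the $m$-modular inverse $\ell^*$ to convert the phase $e^{-i2\pi jr/m}$ into the time shift $j\tilde{k}\zeta$, and dispose of the $z$-component by noting the extra shift is an integer multiple of $2\pi$. Your closing observation that only $\gcd(\ell,m)=1$ is needed (not coprimality of $q$ and $n$ as in Propositions~\ref{PC} and~\ref{SC}) is accurate and consistent with the paper.
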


\begin{proof}
: Since $u_{n}(t)$ is $2\pi$-periodic and
\[
e^{it\sqrt{s_{1}}/\nu}=e^{it\ell/m}%
\]
is $2\pi m$-periodic, the function $q_{n}(t)=e^{it\sqrt{s_{1}}/\nu}u_{n}(t)$
is $2\pi m$-periodic. Furthermore, since%
\begin{equation}
q_{n}(t-2\pi)=e^{-i2\pi\ell/m}q_{n}(t), \label{symq}%
\end{equation}
the orbit of $q_{n}(t)$ is invariant under rotations of $2\pi/m$. By
Lemma\ \ref{Lemma}, since
\[
\Omega=\frac{k\ell-m}{nm}=\frac{r}{m},
\]
with $r=(k\ell-m)/n\in\mathbb{Z}$, the solutions satisfy
\begin{equation}
q_{j}(t)=e^{-i2\pi j(r/m)}q_{n}(t+jk\zeta)\text{.}%
\end{equation}
Since $\ell$ and $m$ are relatively prime we can find $\ell^{\ast}$, the
$m$-modular inverse of $\ell$. Since $\ell\ell^{\ast}=1$ mod $m$, it follows
from the symmetry (\ref{symq}) that
\[
q_{n}(t-2\pi jr\ell^{\ast})=e^{-i2\pi j(r/m)}q_{n}(t).
\]
Therefore,%
\begin{equation}
q_{j}(t)=e^{-i2\pi j(r/m)}q_{n}(t+jk\zeta)=q_{n}(t+j(k-rn\ell^{\ast})\zeta).
\end{equation}

For the planar component $q_{j}(t)$ of spatial Lyapunov families we have the
same relation. In addition we have that the spatial component $z_{n}$ is
$2\pi$-periodic and satisfies $z_{j}(t)=z_{n}(t+jk\zeta)$. Since $rn\ell
^{\ast}\zeta=2\pi r\ell^{\ast}\in2\pi\mathbb{Z}$, it follows that
\[
z_{j}(t)=z_{n}(t+jk\zeta)=z_{n}(t+j(k-rn\ell^{\ast})\zeta),
\]
and thus $z_{n}(t)$ is also $2\pi m$-periodic. 

\end{proof}


\section{Numerical continuation of Lyapunov families}

To continue the Lyapunov families numerically it is necessary to take the
symmetries into account. The equations (\ref{NE}) in the rotational frame,
have two symmetries that are inherited from Newton's equations in the inertial
frame, namely rotations in the plane $e^{\theta i}u_{j}$ and translations in
the spatial coordinate $z_{j}+c$. This implies that any rotation in the plane
and any translation of an equilibrium is also an equilibrium, and that the
linear equations have two conserved quantities and two trivial eigenvalues.

To determine the conserved quantities, we can sum the equation (\ref{NE}) over
the $z_{j}$ coordinates to obtain that $\sum_{j=1}^{n}\ddot{z}_{j}=0$,
\textit{i.e.}, the linear momentum in $z$ is conserved%
\begin{equation}
\sum_{j=1}^{n}\dot{z}_{j}(t)= \text{constant.}%
\end{equation}
The other conserved quantity can be obtained easily in real coordinates.
Identifying $i$ with the symplectic matrix $J$, taking the real product of the
$u$ component of equation (\ref{NE}) with the generator of the rotations
$Ju_{j}$, and summing over $j$, we obtain%
\[
0=\sum_{j=1}^{n}\left\langle \ddot{u}_{j}+2\sqrt{s_{1}}J\dot{u}_{j}%
,Ju_{j}\right\rangle _{\mathbb{R}^{2}}=\frac{d}{dt}\sum_{j=1}^{n}\left\langle
\dot{u}_{j}+\sqrt{s_{1}}Ju_{j},Ju_{j}\right\rangle _{\mathbb{R}^{2}}\text{.}%
\]
Therefore, the second conserved quantity is%
\begin{equation}
\sum_{j=1}^{n}\dot{u}_{j}\cdot Ju_{j} - \sqrt{s_{1}}\left\vert u_{j}%
\right\vert ^{2} .\nonumber
\end{equation}

To continue the Lyapunov families numerically we need to take the conserved
quantities into account. Let $x_{j}=(u_{j,}z_{j})$ be the vector of positions
and $v_{j}=(\dot{u}_{j,}\dot{z}_{j})$ the vector of velocities. In our
numerical computations we use the augmented equations
\begin{align}
\label{EqA}\dot{x}_{j}  & = v_{j}\text{,}\nonumber\\
\\
\dot{v}_{j}  & = 2\sqrt{s_{1}}~diag(J,0)~v_{j}+\nabla_{x_{j}}V+ \sum_{k=1}%
^{3}\lambda_{k}F_{j}^{k} ,\nonumber
\end{align}
where $V(x)=\sum_{i<j}\left\Vert x_{j}-x_{i}\right\Vert ^{-1}$, and where
$F_{j}^{1}=e_{3}$ corresponds to the generator of the translations in $z$,
$F_{j}^{2}=diag(J,0)x_{j}$ to rotations in the plane, and $F_{j}^{3}=v_{j}$ to
the conservation of the energy. The solutions of the equation (\ref{EqA}) are
solutions of the original equations of motion when the values of the three
parameters $\lambda_{k}$ are zero. It is known that the converse of this
statement is also true, for instance see \cite{IzVi03} and \cite{DoVa03}.

\begin{proposition}
Assume that the functions $F^{k}=(F_{1}^{k},...,F_{n}^{k})$ for $k=1,2,3$, are
orthogonal (or linearly independent). Then a solution $(x,v)$ of the equation
is a solution of the augmented equation (\ref{EqA}) if and only if
$\lambda_{j}=0$ for $j=1,2,3$.
\end{proposition}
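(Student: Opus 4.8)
The plan is to prove both directions of the equivalence. The easy direction is immediate: if all $\lambda_k=0$, then the augmented equation (\ref{EqA}) reduces exactly to the original equations of motion, so any solution $(x,v)$ of the augmented system with vanishing multipliers is a genuine solution. The substance of the proposition is the converse: given a solution of the original equations, I must show that the only way to realize it as a solution of (\ref{EqA}) is with $\lambda_1=\lambda_2=\lambda_3=0$. The strategy is to exploit the fact that the three forcing directions $F^k$ are generators of symmetries or conserved quantities, so that pairing the augmented equation against each $F^k$ and integrating over one period annihilates the genuine dynamical terms and leaves only the span of the $\lambda_k$.

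Concretely, I would first treat the augmented system as an equation for the unknown vector $\lambda=(\lambda_1,\lambda_2,\lambda_3)$. Subtracting the original equations of motion from (\ref{EqA}), any discrepancy must equal $\sum_{k=1}^3\lambda_k F^k$. The key step is to show this combination must vanish identically, which then forces each $\lambda_k=0$ by the assumed orthogonality (or linear independence) of the $F^k$. To see that the combination vanishes, I would take the real inner product of the second augmented equation with each generator $F^j$, sum over the bodies, and integrate over the period $2\pi$. For $j=1$ (translation in $z$) and $j=2$ (rotation in the plane), the corresponding integrals of the true dynamical part are total time-derivatives of the conserved quantities computed earlier in the section; by periodicity of the orbit these boundary terms cancel, leaving $\lambda_j\langle F^j,F^j\rangle$ (plus cross terms $\lambda_k\langle F^j,F^k\rangle$ that drop out under orthogonality). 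For $j=3$ ($F^3=v$, energy conservation) the same computation yields the time-derivative of the energy integrated over a period, again zero by periodicity.

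The main obstacle will be handling the three multipliers simultaneously when one assumes only linear independence rather than full orthogonality. In that case the pairings produce a Gram-type linear system $G\lambda=0$, where $G_{jk}=\int \langle F^j,F^k\rangle\,dt$ is the Gram matrix of the forcing directions in the appropriate function space; linear independence of the $F^k$ makes $G$ invertible, so $\lambda=0$. One must verify that the cross terms are genuinely the Gram entries and that no additional contribution survives from the dynamical terms — this requires checking that each pairing of the true vector field against $F^j$ integrates to zero, which is exactly the statement that $F^1,F^2,F^3$ generate the symmetries and conserved quantities identified above (linear momentum in $z$, angular momentum, and energy). Once these three identities are in place, the conclusion follows from the nonsingularity of $G$.

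Finally I would remark that the hypotheses of the proposition are precisely what makes the continuation well-posed: the three added terms do not alter the true periodic orbits but provide the unfolding directions needed to account for the conserved quantities, so that the augmented boundary value problem has an isolated solution amenable to numerical continuation.
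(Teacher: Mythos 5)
Your proposal is correct and follows essentially the same route as the paper: pair the augmented equation with each generator $F^{k}$, sum over the bodies, integrate over one period, use the fact that the dynamical terms reduce to total derivatives of the conserved quantities (hence integrate to zero), and conclude $\lambda_{k}=0$ from orthogonality. Your only addition is spelling out the Gram-matrix argument $G\lambda=0$ for the merely linearly independent case, a detail the paper leaves implicit but which is handled exactly as you describe.
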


\begin{proof}
: Multiplying the equation in (\ref{EqA}) by $F_{j}^{k}$, summing over $j$,
and integrating by parts, we obtain
\[
\int_{0}^{2\pi}\sum_{j=1}^{n}\dot{v}_{j}\cdot F_{j}^{k}dt=\lambda_{k}\int
_{0}^{2\pi}\sum_{j=1}^{n}\left\vert F_{j}^{k}\right\vert ^{2}dt\text{.}%
\]
Suppose that $(x,v)$ is a solution. Then it conserves the aforementioned
quantities, and therefore
\[
\int_{0}^{2\pi}\sum_{j=1}^{n}\dot{v}_{j}\cdot F_{j}^{k}dt=0.
\]
The result that $\lambda_{j}=0$ then follows from the orthogonality of the
fields $F^{k}$. 

\end{proof}

For the purpose of numerical continuation the period of the solutions is
rescaled to $1$, so that it appears explicitly in the equations. Let
$\varphi(t,x,v)$ be the flow of the rescaled equations. Then we define the
time-$1$ map for the rescaled flow as
\[
\varphi(1,x,v;T,\lambda_{1},\lambda_{2},\lambda_{3}):\mathbb{R}^{6n}%
\times\mathbb{R}^{4}\rightarrow\mathbb{R}^{6n}.
\]
Let $\tilde{x}(t)$ be the solution computed in the previous step along a
family. We implement Poincar\'{e} restrictions given by the integrals%
\begin{align*}
I_{1}(x,v)  &  =\int_{0}^{1}x_{n}\cdot e_{2}~dt=0,\\
I_{2}(x,v)  &  =\int_{0}^{1}x_{n}\cdot e_{3}~dt=0,\\
I_{3}(x,v)  &  =\int_{0}^{1}\left(  x_{n}(t)-\tilde{x}_{n}(t)\right)
\cdot\tilde{x}_{n}^{\prime}(t)~dt=0\text{,}%
\end{align*}
which correspond to rotations, translations in $z$, and the energy, respectively.

The results in \cite{DoVa03} are based on the continuation of zeros of the
map
\[
F(x,v;T,\lambda_{1},\lambda_{2},\lambda_{3}):=\left(  (x,v)-\varphi
(x,v),I_{1},I_{2},I_{3}\right)  :\mathbb{R}^{6n+4}\rightarrow\mathbb{R}%
^{6n+3}.
\]
Actually, continuation is done with AUTO for the complete operator equation in
function space. That is, the numerical computation of the maps $\varphi$ and
$I_{j}$ is done for the corresponding operators in $C_{2\pi}^{2}%
(\mathbb{R}^{6n})$. This operator equation is discretized using highly
accurate piecewise polynomial collocation at Gauss points.

\section{Lyapunov families and bifurcating families}

In this section we give a brief description of some of the many solutions
families that we have computed using python scripts that drive the AUTO
software. We start with Planar families that arise from the circular,
polygonal equilibrium state of the $n$-body problem when $n\geq6$. For the
case $n=6$ there is a single such Planar family. While of interest, its orbits
are of relatively small amplitude, and for this reason we have chosen to
illustrate the numerical results for the case $n=7$ in this section. One of
the four Planar families that exist for $n=7$ also consists of relatively
small amplitude orbits. The other three Planar families are illustrated in
Figure~\ref{fig01}, where the panels on the left show an orbit along each of
three distinct Planar Lyapunov families. These orbits are well away from the
polygonal relative equilibrium from which the respective families originate,
while they are also still well away from the collision orbits which these
families appear to approach. The panels on the right in Figure~\ref{fig01}
show orbits along the three families that are further away from the relative
equilibria. Orbits along the Planar families for the cases $n=8$ and $n=9$
share many features with those for the case $n=7$.

Families of spatial orbits, which have nonzero $z$-component, emanate from the
polygonal relative equilibrium when $n\geq3$. These families and their orbits
are often referred to as \textquotedblleft Vertical\textquotedblright, because
the solution of the linearized Newton equations at the equilibrium is
perfectly vertical, \textit{i.e.}, the $x$- and $y$-components are identically
zero. For the case $n=3$ the Vertical Lyapunov family is highly degenerate, as
it corresponds to an eigenvalue of algebraic multiplicity $5$, and there are
no further eigenvalues that give rise to Vertical orbits. For the case $n=4$
there is an equally degenerate eigenvalue ($k=1$). However, there is also a
nondegenerate eigenvalue that gives rise to a Vertical family, namely the one
known as the \textquotedblleft Hip-Hop family\textquotedblright\ ($k=2$). The
top-left panel of Figure~\ref{fig02} shows orbits along this family, which
terminates in a collision orbit. The coloring of the orbits along the family
gradually changes from solid blue (near the equilibrium) to solid red (near
the terminating collision orbit). The same coloring scheme is used when
showing other entire families of orbits in rotating coordinates.

The top-right panel shows a single orbit from the Hip-Hop family, namely the
first bifurcation orbit encountered along it. The color of this orbit
gradually changes from blue to red as the orbit is traversed, so that one can
infer the direction of motion. The masses are shown at their ``initial''
positions. The same coloring scheme is used when showing other individual
orbits in rotating coordinates.

The center-left panel of Figure~\ref{fig02} shows the Axial family that
bifurcates from the Hip-Hop family. The name ``Axial'' alludes to the fact
that the orbits of this family are invariant under the transformation
$(-y,-z)$, when the $x$-axis is chosen to pass through the \textquotedblleft
center\textquotedblright of the orbit. The Axial family connects to a Planar
family, namely at the planar bifurcation orbit shown in the center-right panel
of Figure~\ref{fig02}. We refer to this Planar family as ``Unchained'',
because some of its orbits give rise to choreographies called ``Unchained
polygons'' in \cite{ChFe08}. The Hip-Hop family for $n=4$, and its bifurcating
families, are qualitatively similar to corresponding families that we have
computed for the cases $n=6$ and $n=8$.

The examples of orbit families given in this section are representative of the
many planar and spatial Lyapunov families that we have computed, their
secondary and tertiary bifurcating families, as well as corresponding families
for other values of $n$. Complete bifurcation pictures are rather complex, but
our algorithms are capable of attaining a high degree of detail; which at this
point excludes only the degenerate bifurcations mentioned earlier.

In the following sections we focus our attention on choreographies that arise
from resonant periodic orbits. The statements proved for the Lyapunov families
also hold true for subsequent spatial and planar bifurcations, as long as the
symmetries (\ref{PS}) and (\ref{SS}) are present. However this is not always
the case, and in Section 6 we give details on a Lyapunov family that does not
possess these symmetries.

Figure~\ref{fig03} illustrates the appearance of choreographies from resonant
Lyapunov orbits and from resonant orbits along subsequent bifurcating
families. Specifically, the top-left panel of Figure~\ref{fig03} shows a
resonant Planar Lyapunov orbit for the case $n=7$, and the top-right panel
shows the same orbit in the inertial frame, where it is seen to correspond to
a choreography. Similarly the center panels show a resonant spatial Lyapunov
orbit and corresponding choreography for $n=9$, while the bottom panels show a
resonant Axial orbit and corresponding choreography for $n=4$.
\newpage
\begin{figure}[th]
\par
\begin{center}
\resizebox{15.5cm}{!}{
\includegraphics{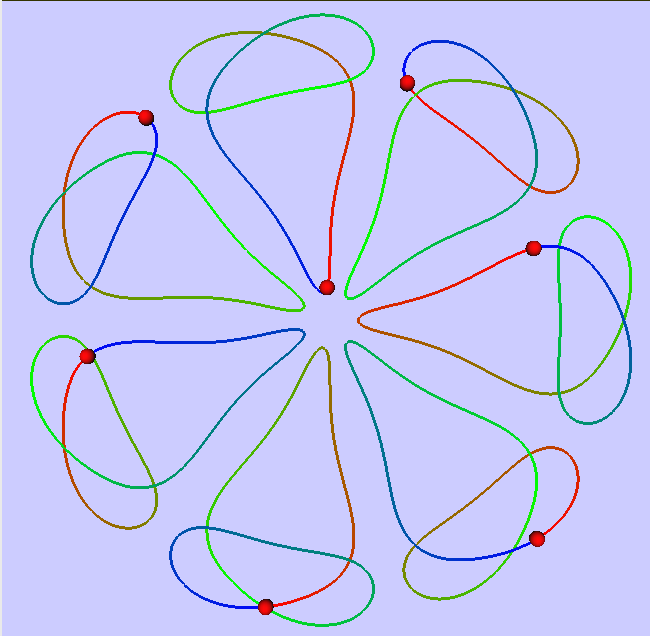} ~~~
\includegraphics{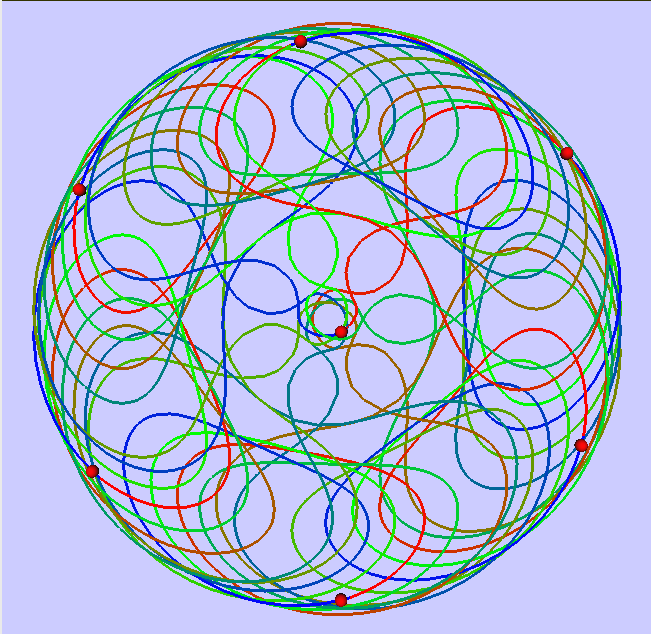}} \vskip0.15cm \resizebox{15.5cm}{!}{
\includegraphics{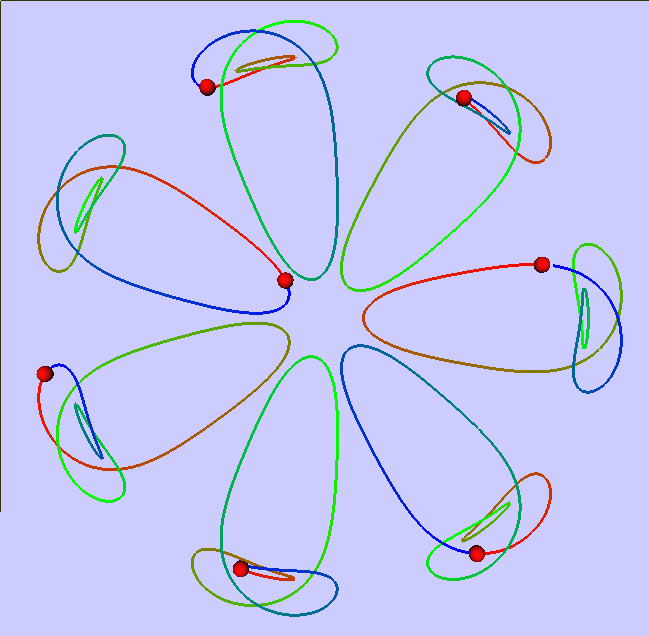} ~~~
\includegraphics{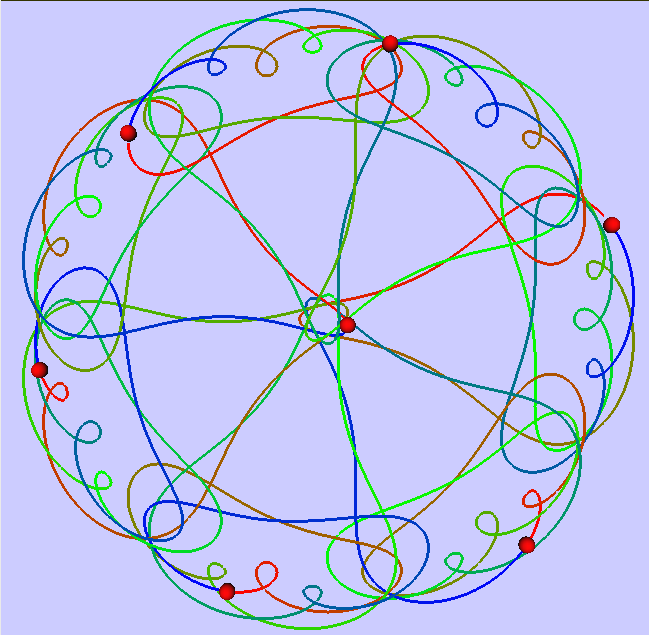}} \vskip0.15cm \resizebox{15.5cm}{!}{
\includegraphics{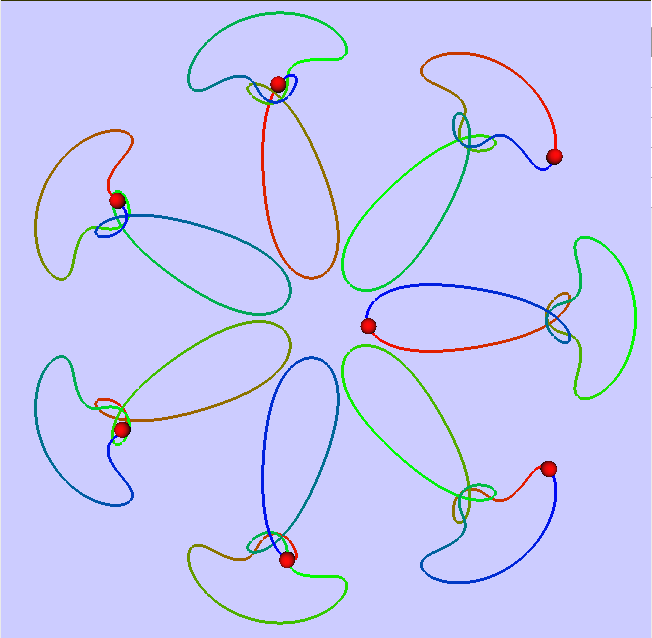} ~~~
\includegraphics{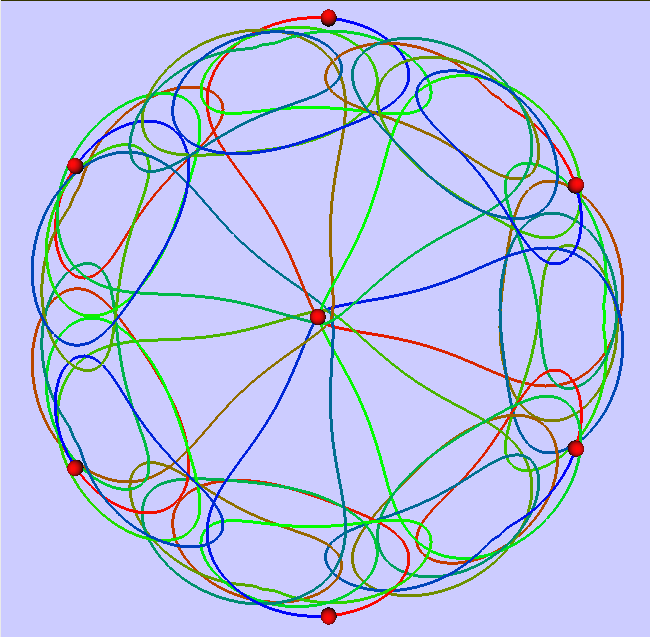}}
\end{center}
\caption{ Some orbits along Planar families for the case $n=7$. Top: two
orbits with $k=2$. Center: two orbits with $k=3$. Bottom: two orbits with
$k=4$. }%
\label{fig01}%
\end{figure}
\clearpage
\begin{figure}[th]
\par
\begin{center}
\resizebox{15.0cm}{!}{
\includegraphics{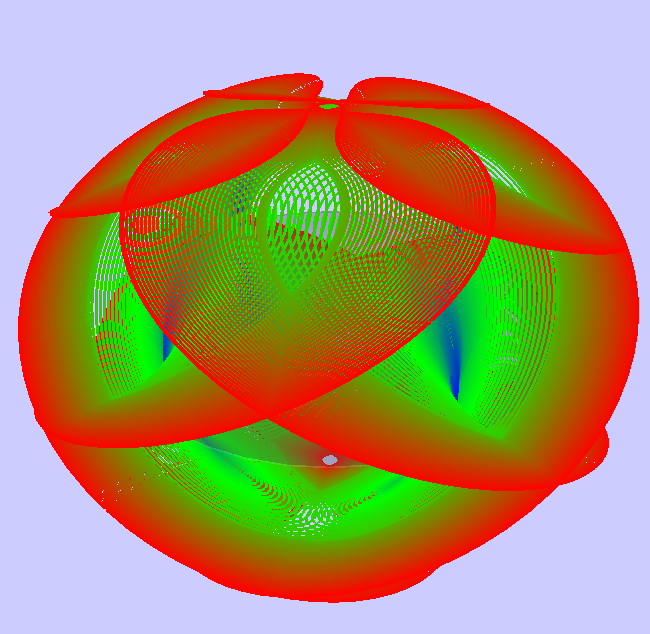} ~~~
\includegraphics{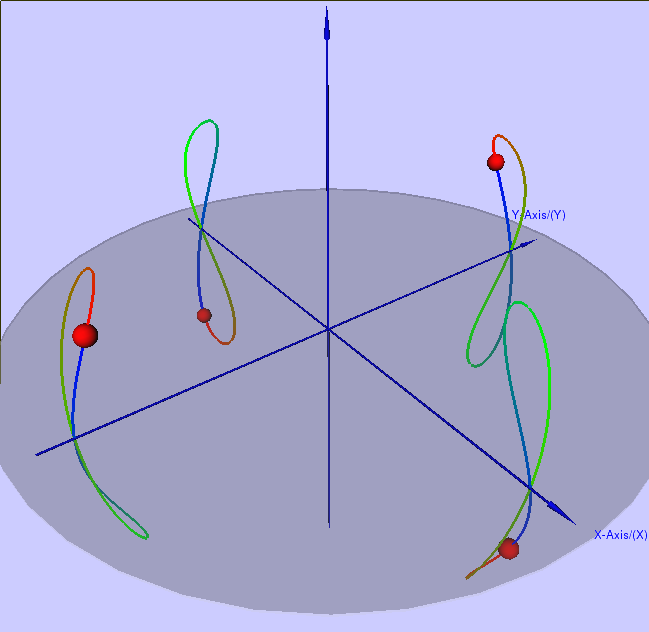}} \vskip0.15cm \resizebox{15.0cm}{!}{
\includegraphics{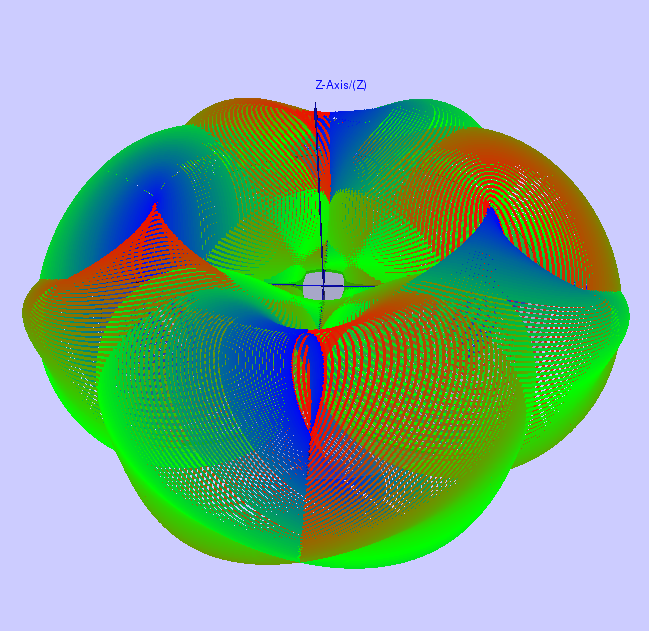} ~~~
\includegraphics{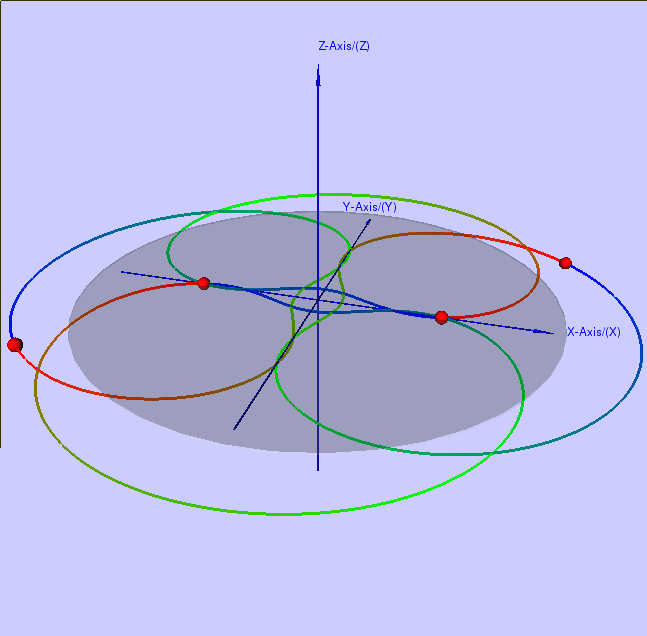}} \vskip0.15cm \resizebox{15.0cm}{!}{
\includegraphics{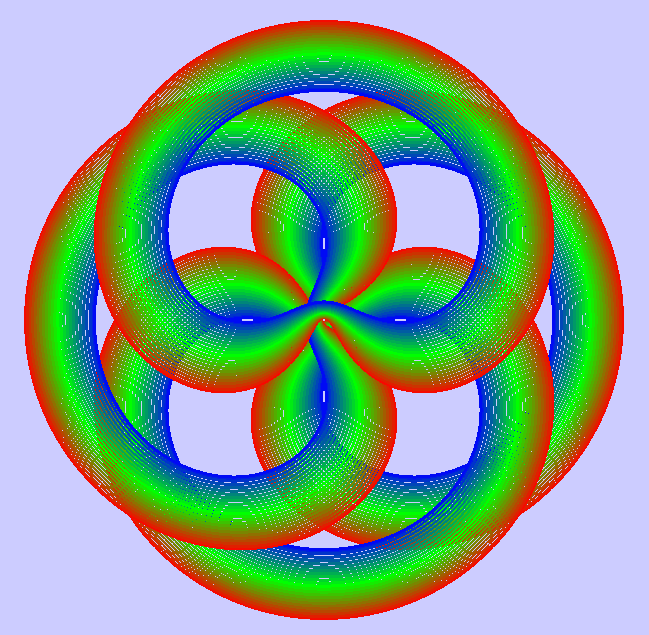} ~~~
\includegraphics{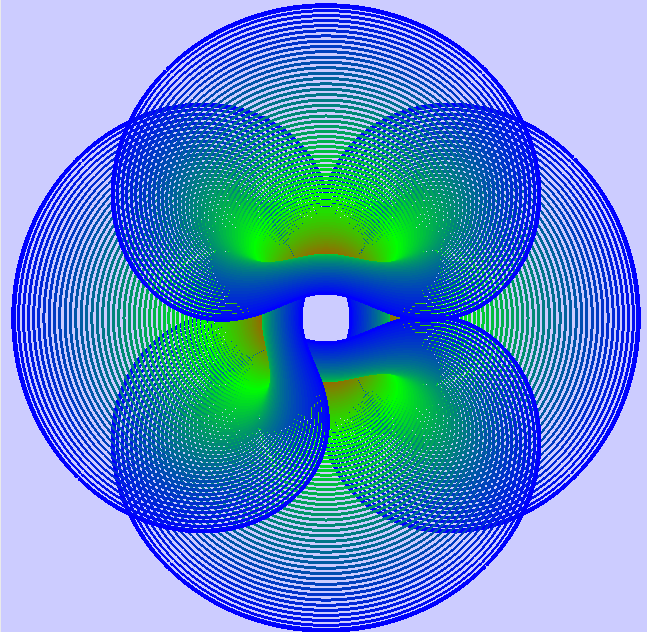}}
\end{center}
\caption{ Top-Left: the Vertical Lyapunov family for $n=4$ and $k=2$.
Top-Right: the first bifurcation orbit along the Vertical family. Center-Left:
the Axial family that bifurcates from the Vertical family. Center-Right: the
bifurcation orbit where the Axial family connects to to a Planar family.
Bottom-Left: one branch of the Planar family to which the Axial family
connects. Bottom-Right: the other branch of the family to which the Axial
family connects. }%
\label{fig02}%
\end{figure}
\clearpage
\begin{figure}[th]
\par
\begin{center}
\resizebox{15.35cm}{!}{
\includegraphics{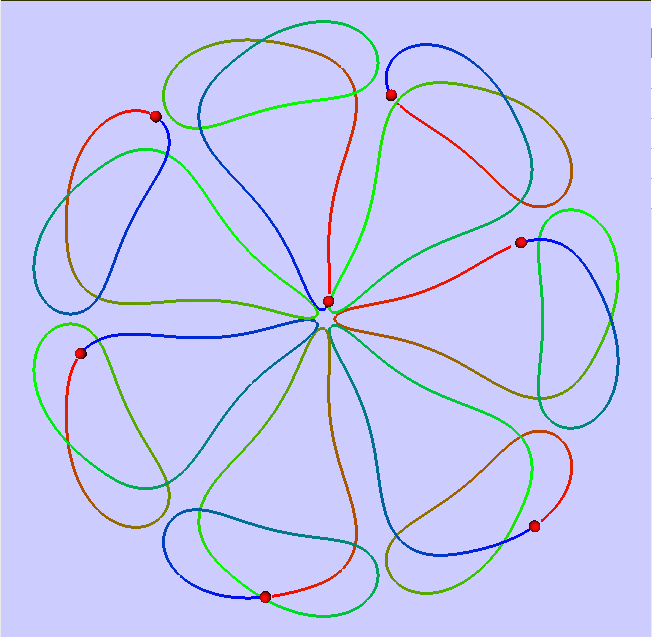} ~~~
\includegraphics{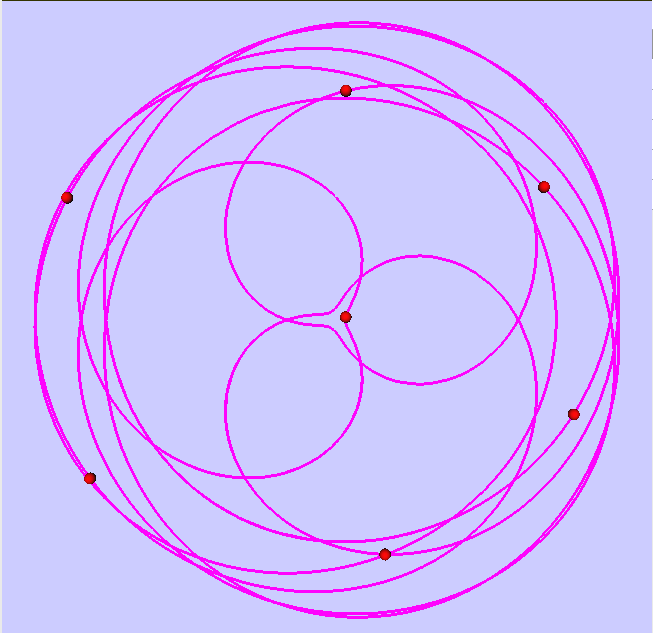}} \vskip0.15cm
\resizebox{15.35cm}{!}{
\includegraphics{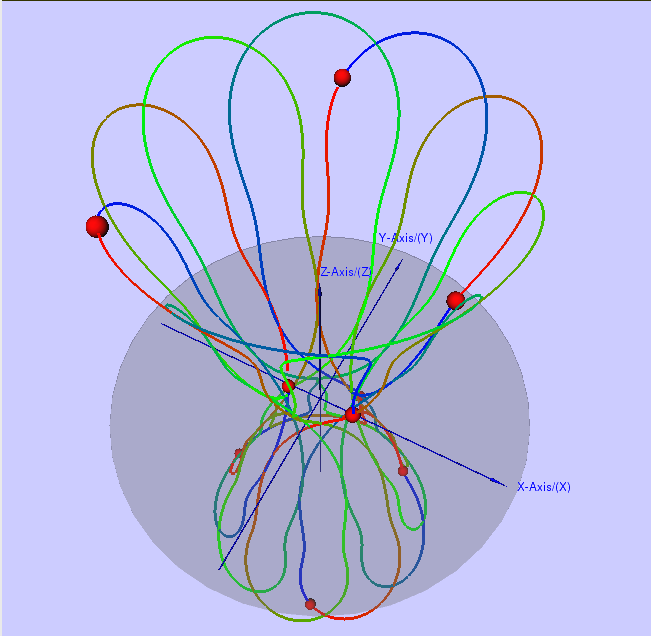} ~~~
\includegraphics{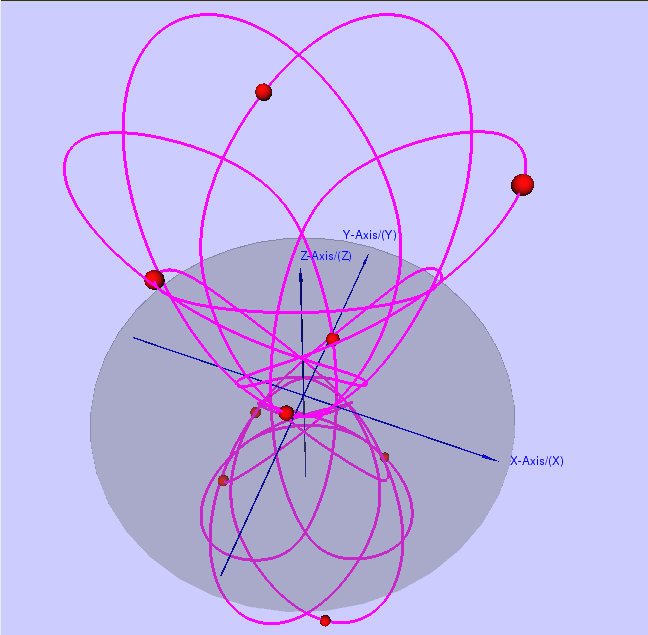}} \vskip0.15cm
\resizebox{15.35cm}{!}{
\includegraphics{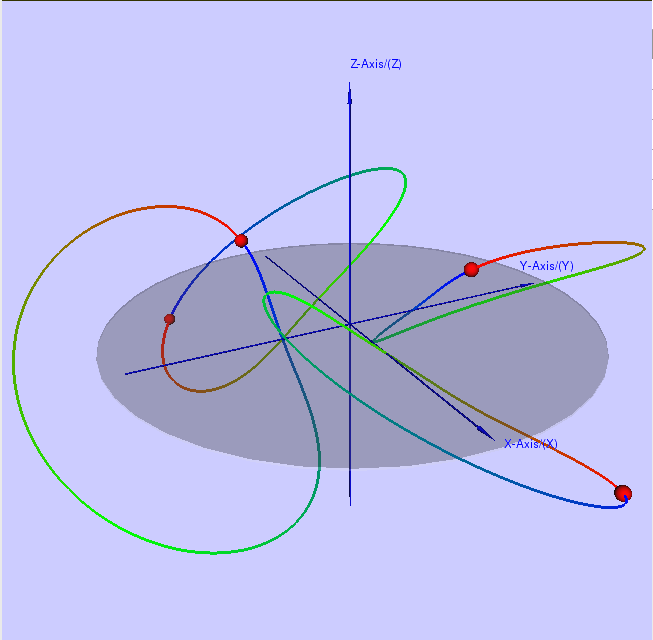} ~~~
\includegraphics{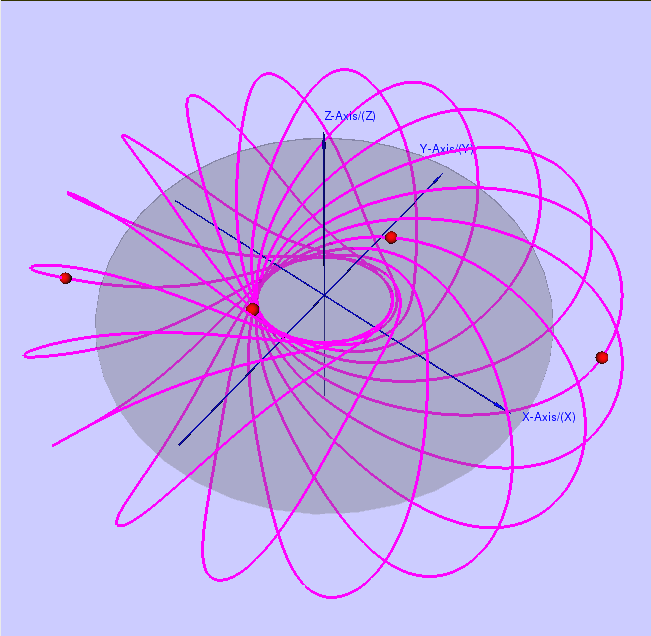}}
\end{center}
\caption{The panels on the left show orbits in the rotating frame, while the
panels on the right show the same orbits in the inertial frame, where they
correspond to choreographies. Top: a resonant Planar Lyapunov orbit. Center: a
resonant Vertical Lyapunov orbit. Bottom: a resonant Axial orbit. }%
\label{fig03}%
\end{figure}
\clearpage

\section{Choreographies along Planar Lyapunov families}

In this section we present some of the infinitely many choreographies that
appear along the Planar Lyapunov families, namely for the cases $n=7$, $n=8$
and $n=9$, as shown in Figures~\ref{fig04},~\ref{fig05}, and \ref{fig06},
respectively. Corresponding data are given in Tables~1~-~3. Each choreography
winds $\ell$ times around a center and is invariant under rotations of
$2\pi/m$. The bodies move in groups of $d$-polygons, where $d$ is the greatest
common divisor of $n$ and $k$. In addition these choreographies are symmetric
with respect to reflection in the plane generated by the second symmetry in
(\ref{PS}).

When there is an infinite number of choreographies then the winding number
$\ell$ and the symmetry indicator $m$ can be arbitrarily large, and the
choreography arbitrarily complex. From the observed range of values of the
periods along a Lyapunov family we mostly choose the simpler resonances, and
hence the simpler choreographies. For example, the family $k=2$ for $n=7$ has
a relatively simple choreography. Here $\ell=5$ and $m=3$ are relatively
prime, with
\[
k\ell-m=2\times5-3=7\in n\mathbb{Z}\text{.}%
\]
In this example $2\pi s_{1}^{-1/2}=4.1387$, and since $T_{5:3}=(2\pi
s_{1}^{-1/2})(5/3)=\allowbreak6.8978$ is within the range of periods of the
Lyapunov family, it follows that the $5:3$ resonant Lyapunov orbit corresponds
to a choreography in the inertial frame. This choreography is shown in the
center-right panel of Figure~\ref{fig04}. It has period $3T_{5:3}$, winding
number $5$, and it is invariant under rotations of $2\pi/3$. Similar
statements apply to other planar choreographies.

\noindent For $n=7$ there is a sequence of Planar Lyapunov families having
$k=2,3,4,2$, respectively. The last of these families, with $k=2$, has orbits
of rather small amplitude, and is not included in the families shown in
Figure~\ref{fig04}.%

\[%
\begin{tabular}
[c]{|l|l|l|c|}\hline
$k$ & Eigenvalue & Period Interval & Resonant Orbit\\\hline
2 & 1.53960$i$ & [4.0811, 28.328] & 4:1 and 5:3\\\hline
3 & 1.85058$i$ & [3.3953, 27.974] & 3:2 and 5:1\\\hline
4 & 1.50806$i$ & [4.1664, 28.499] & 2:1 and 15:4\\\hline
2 & 0.761477$i$ & [8.2513 ,8.3328] & --\\\hline
\end{tabular}
\]
\centerline{Table~1: Data for $n=7$ bodies.}

\noindent For $n=8$ there is a sequence of Planar Lyapunov family having
$k=2,3,4,5,2$, respectively. We have chosen one choreography from each one of
these families, with an additional one for $k=5$, in Figure~\ref{fig05}.%

\[%
\begin{tabular}
[c]{|l|l|l|c|}\hline
$k$ & Eigenvalue & Period Interval & Resonant Orbit\\\hline
2 & 1.94947$i$ & [3.2230, 14.836] & 11:6\\\hline
3 & 2.39714$i$ & [2.6211, 29.654] & 11:9\\\hline
4 & 2.41171$i$ & [2.6053, 7.4935] & 5:4\\\hline
5 & 1.91468$i$ & [3.2814, 29.122] & 5:1 and 9:5\\\hline
2 & 0.435437$i$ & [5.4804, 14.430] & 5:2\\\hline
\end{tabular}
\]
\centerline{Table~2: Data for $n=8$ bodies.}

\noindent For $n=9$ there is a sequence of Planar Lyapunov families having
$k=2,3,4,5,6,2$, respectively. In Figure~\ref{fig06} we have selected one
choreography from each of these families.%

\[%
\begin{tabular}
[c]{|l|l|l|c|}\hline
$k$ & Eigenvalue & Period Interval & Resonant Orbit\\\hline
2 & 2.27175$i$ & [2.7660, 30] & 5:1\\\hline
3 & 2.85442$i$ & [2.2012, 10.298] & 4:3\\\hline
4 & 3.06012$i$ & [2.0534, 30.411] & 5:2\\\hline
5 & 2.90713$i$ & [2.1613, 30.612] & 2:1\\\hline
6 & 2.26399$i$ & [2.7197, 10.008] & 5:3\\\hline
2 & 0.196565$i$ & [15.400, 31.927] & 5:1\\\hline
\end{tabular}
\]
\centerline{Table~3: Data for $n=9$ bodies.}

\clearpage
\begin{figure}[h]
\par
\begin{center}
\resizebox{15.3cm}{!}{
\includegraphics{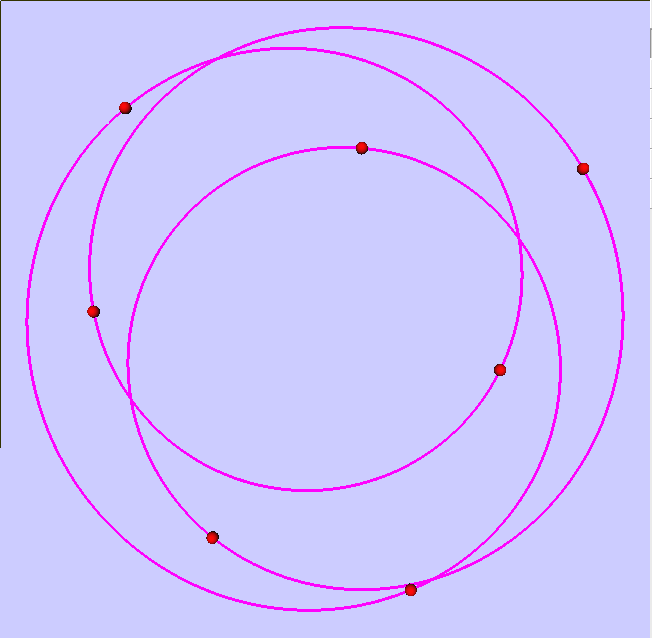} ~~~
\includegraphics{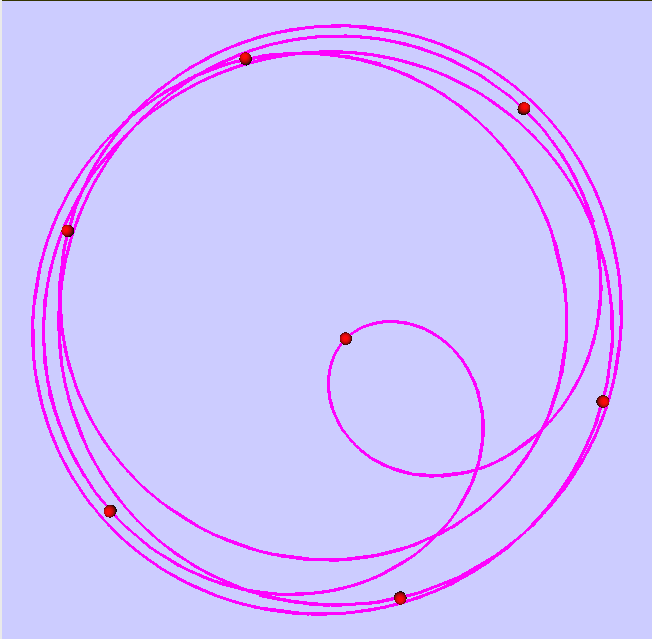}} \vskip0.15cm
\resizebox{15.3cm}{!}{
\includegraphics{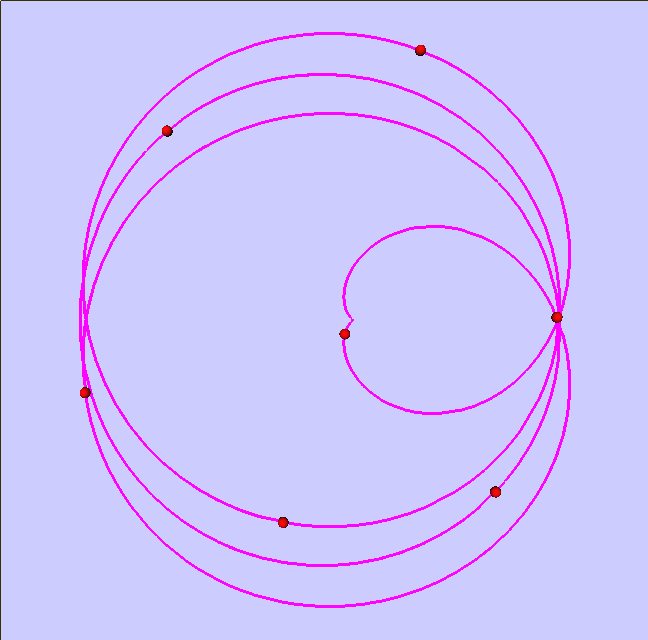} ~~~
\includegraphics{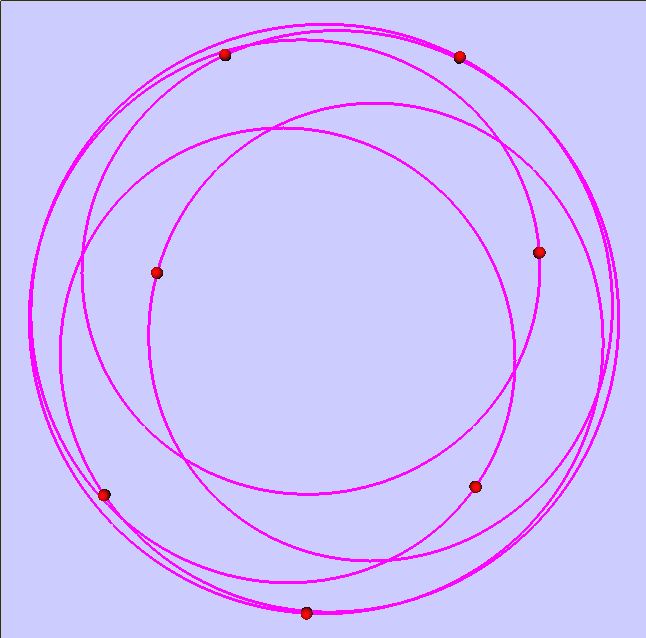}} \vskip0.15cm
\resizebox{15.3cm}{!}{
\includegraphics{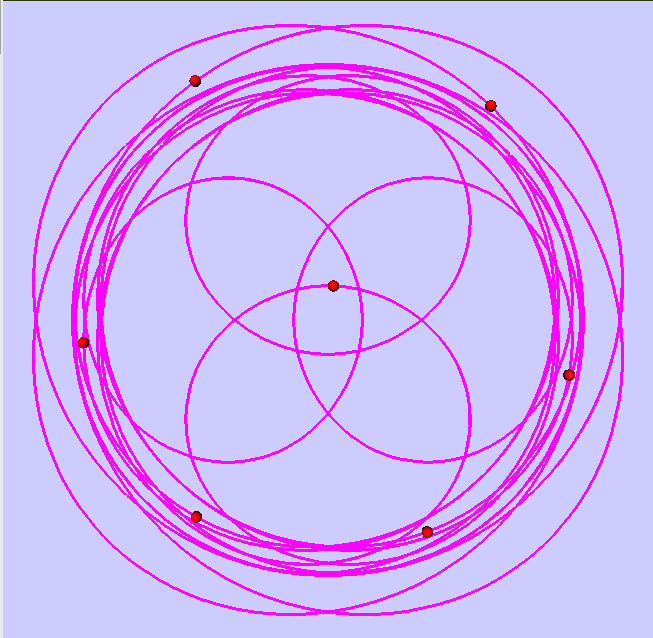} ~~~
\includegraphics{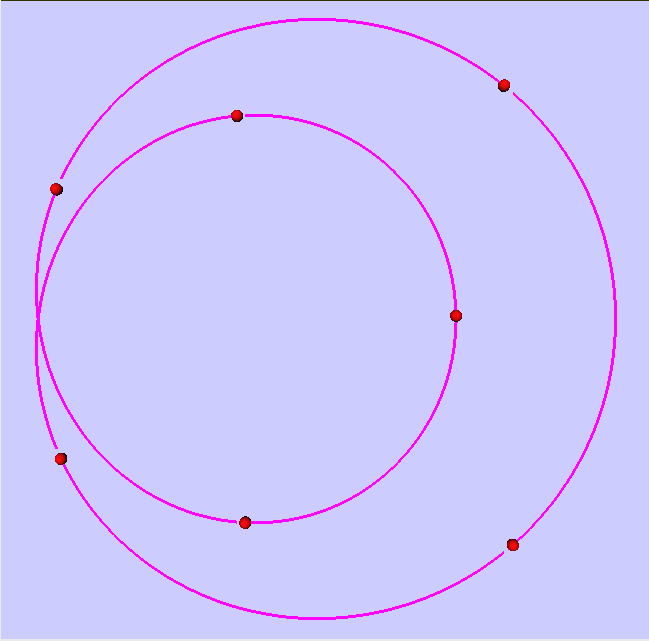}}
\end{center}
\caption{ Planar choreographies for $n=7$ bodies: Top-Left: a $3:2$-resonant
orbit for $k=3$. Top-Right: a $5:1$-resonant orbit for $k=3$. Center-Left: a
$4:1 $-resonant orbit for $k=2$. Center-Right: a $5:3$-resonant orbit for
$k=2$. Bottom-Left: a $15:4$-resonant for orbit for $k=4$. Bottom-Right: a
$2:1$-resonant orbit for $k=4$. }%
\label{fig04}%
\end{figure}
\clearpage
\begin{figure}[h]
\par
\begin{center}
\resizebox{15.35cm}{!}{
\includegraphics{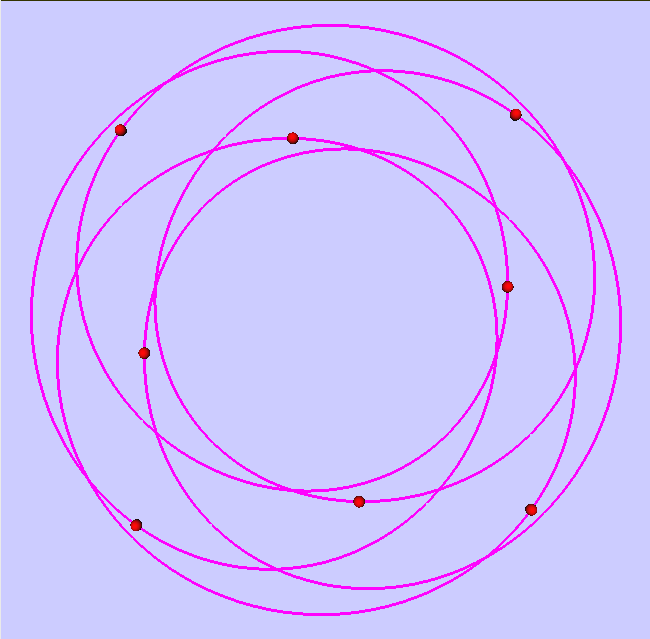} \qquad
\includegraphics{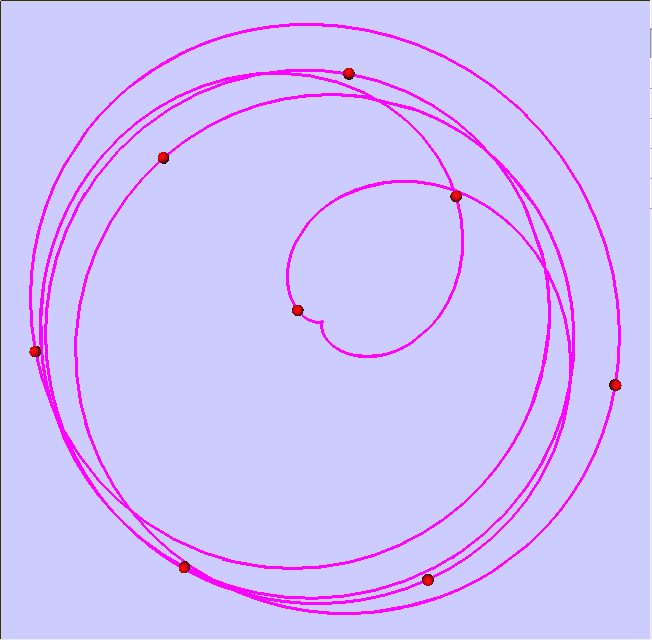}} \vskip0.15cm
\resizebox{15.35cm}{!}{
\includegraphics{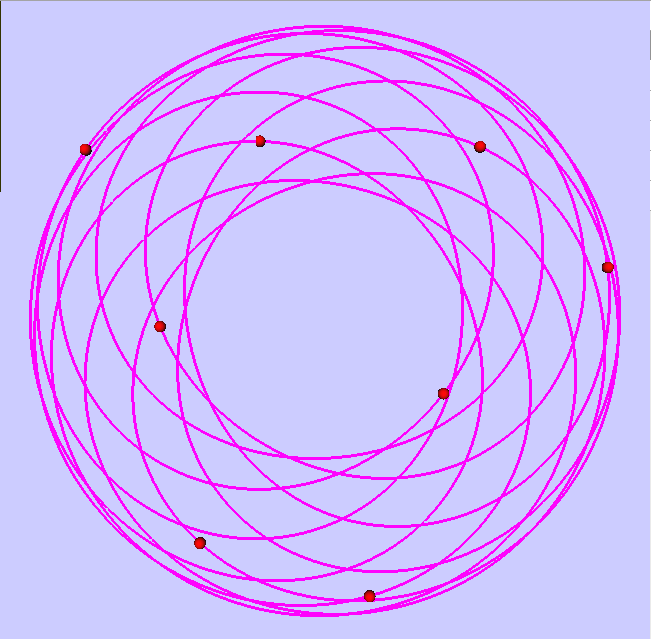} \qquad
\includegraphics{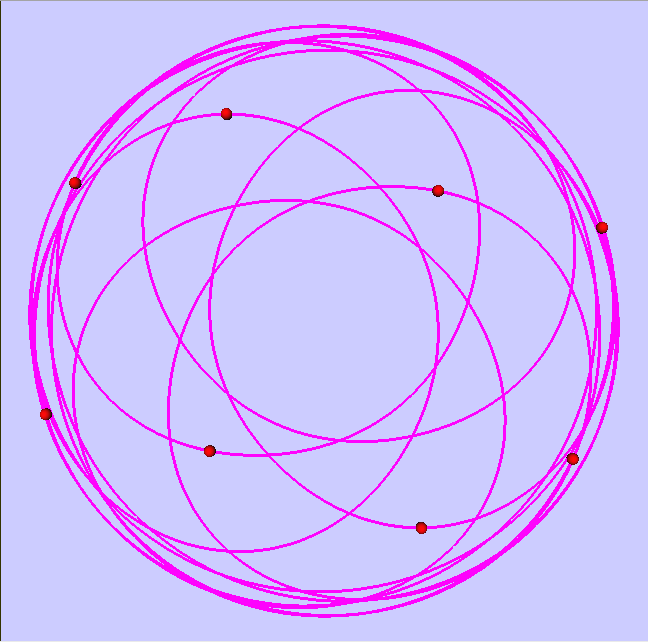}} \vskip0.15cm
\resizebox{15.35cm}{!}{
\includegraphics{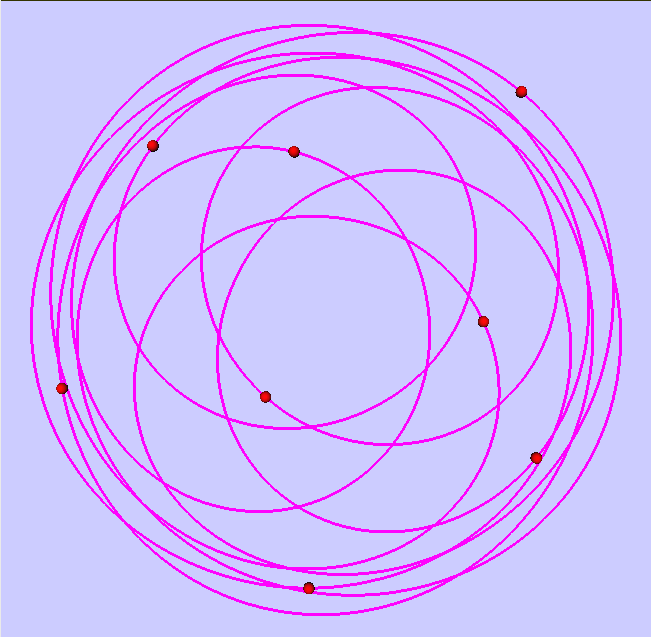} \qquad
\includegraphics{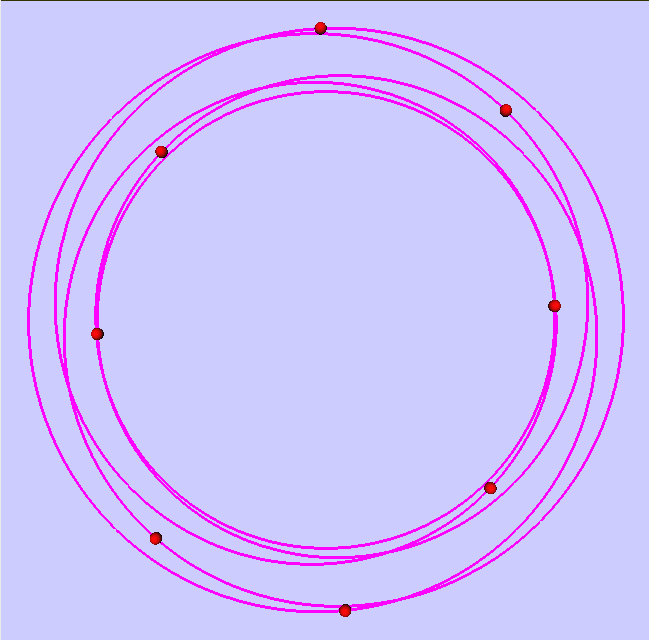}}
\end{center}
\caption{ Planar choreographies for $n=8$. Top-Left: a $5:4$-resonant orbit
for $k=4$. Top-Right: a $5:1$-resonant orbit for $k=5$. Center-Left: an
$11:9$-resonant orbit for $k=3$. Center-Right: an $11:6$-resonant orbit for
$k=2$. Bottom-Left: a $9:5$-resonant orbit for $k=5$. Bottom-Right: a
$5:2$-resonant orbit for $k=2$. }%
\label{fig05}%
\end{figure}
\clearpage
\begin{figure}[h]
\par
\begin{center}
\resizebox{15.3cm}{!}{
\includegraphics{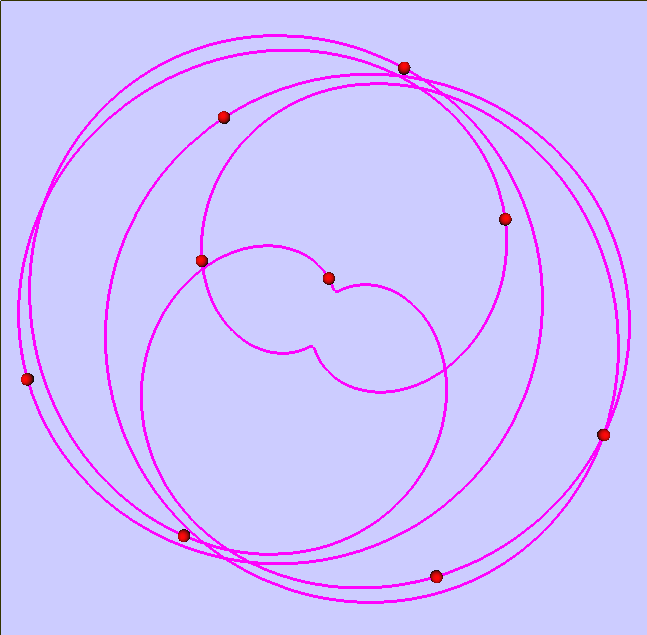} ~~~
\includegraphics{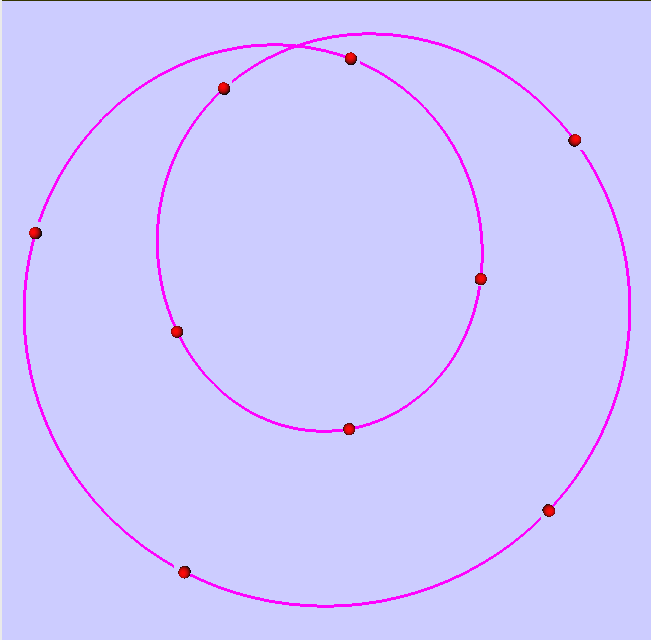}} \vskip0.15cm
\resizebox{15.3cm}{!}{
\includegraphics{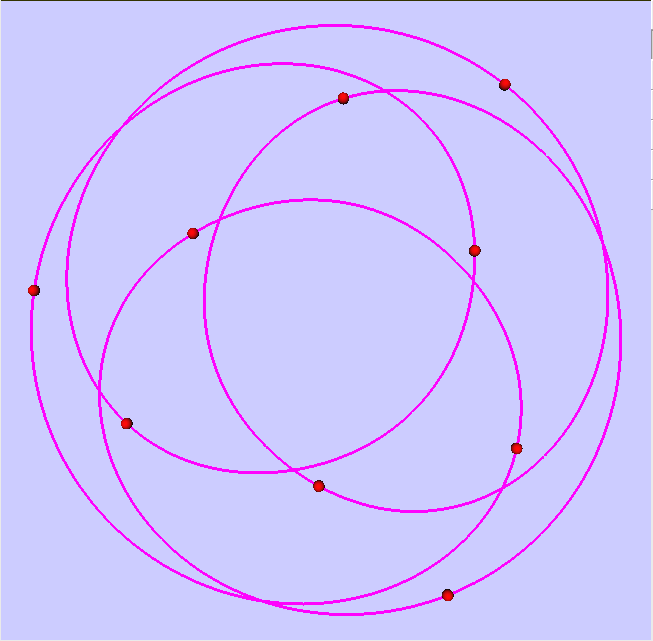} ~~~
\includegraphics{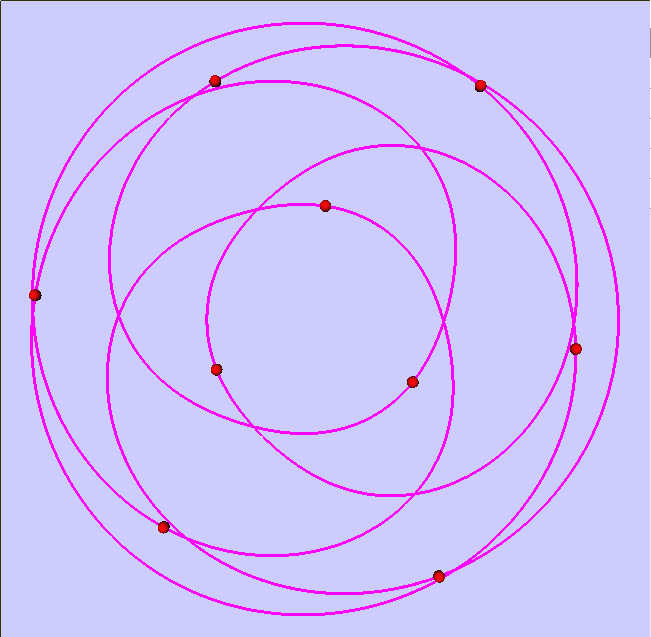}} \vskip0.15cm
\resizebox{15.3cm}{!}{
\includegraphics{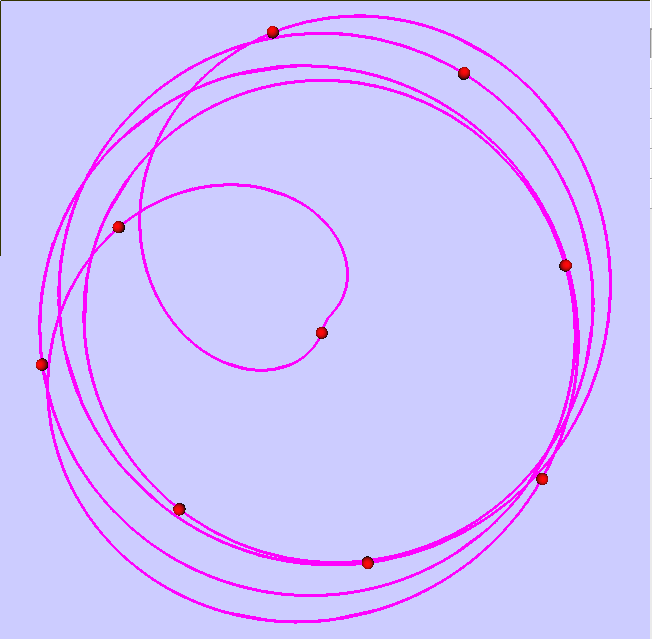} ~~~
\includegraphics{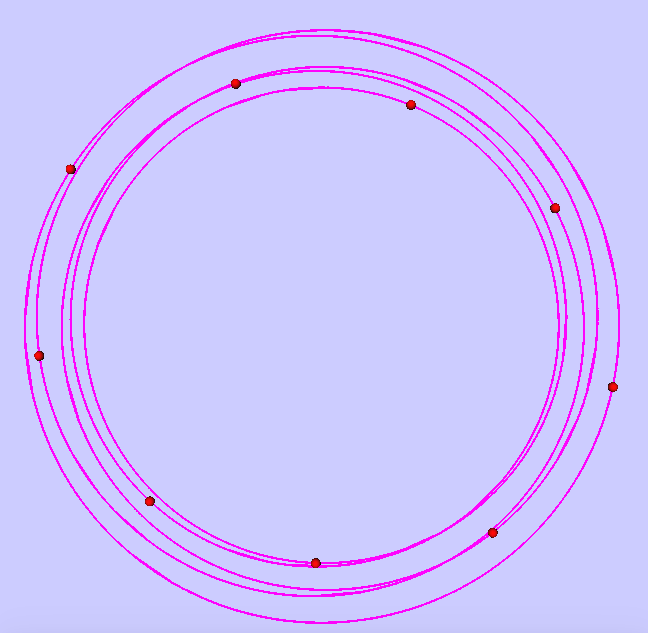}}
\end{center}
\caption{ Planar choreographies for $n=9$. Top-Left: a $5:2$-resonant orbit
for $k=4$. Top-Right: a $2:1$-resonant orbit for $k=5$. Center-Left: a
$4:3$-resonant orbit for $k=3$. Center-Right: a $5:3$-resonant orbit for
$k=6$. Bottom-Left: a $5:1$-resonant orbit for $k=2$. Bottom-Right: a
$5:1$-resonant orbit for $k=2$. }%
\label{fig06}%
\end{figure}
\clearpage

\section{Choreographies along Vertical Lyapunov families and their bifurcating
families}

In this section we give examples of choreographies along the Vertical Lyapunov
families and along their bifurcating families. The projections of these
choreographies onto the $xy$-plane are invariant under rotations of $2\pi/m$.
The bodies form groups of $d$-polygons, where $d$ is the greatest common
divisor of $n$ and $k$. Since we obtain choreographies by rotating closed
orbits, each choreography is contained in a surface of revolution. Indeed, due
to the symmetries of the Vertical families the choreographies wind around a
cylindrical manifold with winding number $\ell$, while for the Axial families
the choreographies wind around a toroidal manifold with winding numbers $\ell$
and $m$.

The spatial choreographies along the Vertical Lyapunov families are symmetric
with respect to the reflections $-y$ and $-z$, when the $x$-axis is chosen to
pass through the \textquotedblleft center\textquotedblright of the orbit.
While planar choreographies for large values of $\ell$ and $m$ are somewhat
difficult to appreciate, spatial choreographies of this type are easier to
visualize because they wind around a cylindrical manifold. For even values of
$n$ we mention the case $k=n/2$, for which the orbits of the Vertical family
are known as Hip-Hop orbits. Choreographies along such families have been
described before in \cite{TeVe07}, and in \cite{ChFe08}, where they were found
numerically as local minimizers of the action restricted to symmetric paths.
Along Hip-Hop families we have located the choreography for $n=4$ found in
\cite{TeVe07}. Several choreographies along Hip-Hop families are shown the top
four panels of Figure~\ref{fig07}. We have not computed all Vertical families,
due to presence of double resonant eigenvalues. For the case $n=9$, we show
two choreographies along a family of periodic orbits that is not a Hip-Hop
family, namely in the two bottom panels of Figure~\ref{fig07}. Such families
were not determined in \cite{ChFe08} because they do not correspond to local
minimizers of the action. Further investigation is needed for a systematic
approach to determine these families.

We now present some choreographies along the families that emanate from the
first bifurcation along Hip-Hop families in the rotating frame. The
projections of these spatial choreographies onto the $xy$-plane are somewhat
similar to those along the Planar Lyapunov orbits. However, the spatial
periodic orbits in the rotating frame that correspond to these choreographies
have only one symmetry, which is given by the transformation $(-y,-z)$ when
the $x$-axis is chosen to pass through the \textquotedblleft
center\textquotedblright of the orbit in the rotating frame; see the bottom
left panel of Figure~\ref{fig03}. This is due to the fact that the Axial
family arises from the Vertical Lyapunov family via a symmetry-breaking
bifurcation. The symmetry implies that choreographies along the Axial families
wind around a toroidal manifold with winding numbers $\ell$ and $m$. Since we
assume that $\ell$ and $m$ are co-prime, the choreography path is known as a
\textit{torus knot}. The simplest nontrivial example is the $(2,3)$-torus
knot, also known as the \textit{trefoil knot}. We note that for other integers
$\ell$ and $m$ such that $k\ell-m\notin n\mathbb{Z}$, the orbit of the $n$
bodies in the inertial frame consists of separate curves that form a torus
link. Some of the choreographies along the Axial families are shown in
Figure~\ref{fig08}.

In Section 3 we already mentioned that there are planar bifurcation orbits
along Axial families that give rise to planar families. Such an Axial family
and its planar bifurcation orbit are shown in the center panels of
Figure~\ref{fig02}, namely for the case $n=4$. Orbits along the two branches
of the bifurcating planar family are shown in the bottom panels. Specifically,
our numerical computations indicate that Hip-Hop families connect indirectly
to planar families via the above-described tertiary bifurcation.
Choreographies along such planar families have symmetries that are similar to
those of Planar Lyapunov families, although in fact these families do not
correspond to Lyapunov families. While there are no Planar Lyapunov families
for $n=4, 5$, and $6$, there are such tertiary planar families for these
values of $n$, and these contain planar choreographies. Such choreographies
are called unchained polygons in \cite{ChFe08}, and there are infinitely many
of these. In particular, the Vertical family for $n=3$\ and $k=1$ leads
indirectly to the planar $P_{12}$-family of Marchal \cite{Ma00}. We have
continued such families numerically for $k=n/2$, where $n=4, 6$, and $8$, and
six choreographies along them are shown in the panels of Figure~\ref{fig09}.
\clearpage
\begin{figure}[h]
\par
\begin{center}
\resizebox{15.2cm}{!}{
\includegraphics{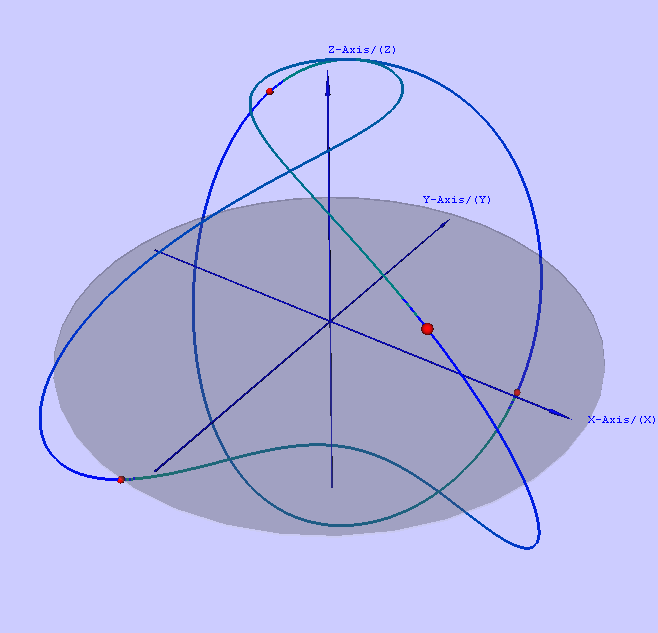} ~~~
\includegraphics{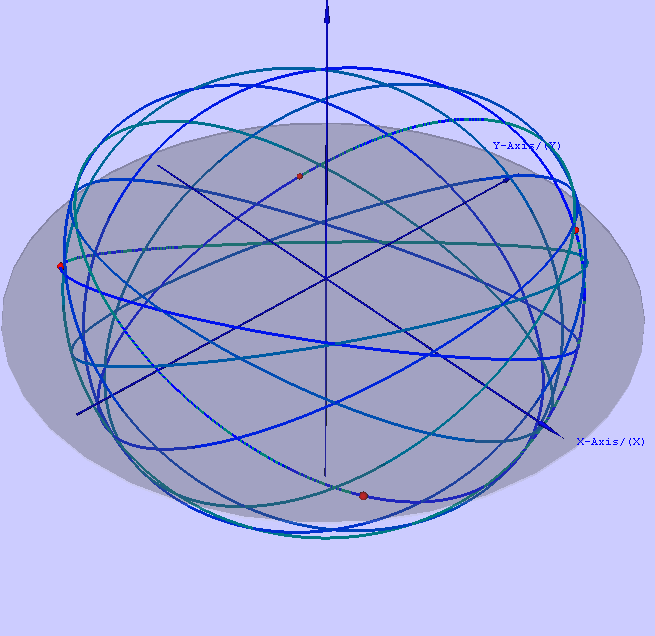}} \vskip0.15cm
\resizebox{15.2cm}{!}{
\includegraphics{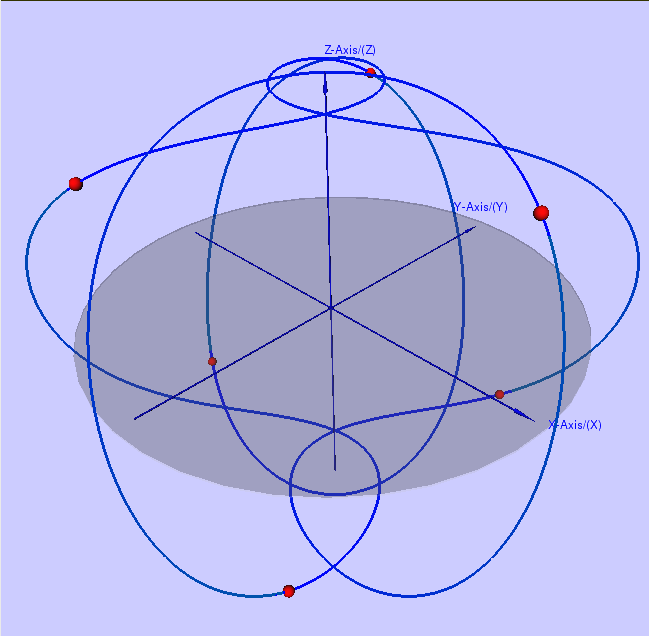} ~~~
\includegraphics{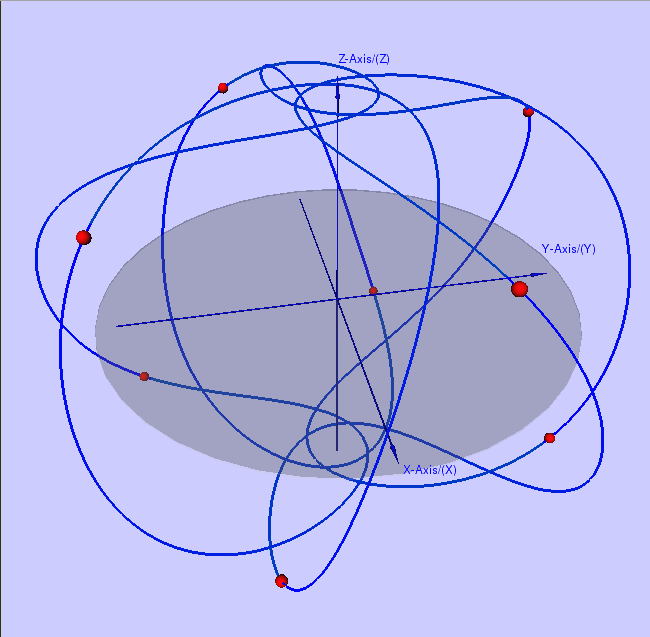}} \vskip0.15cm
\resizebox{15.2cm}{!}{
\includegraphics{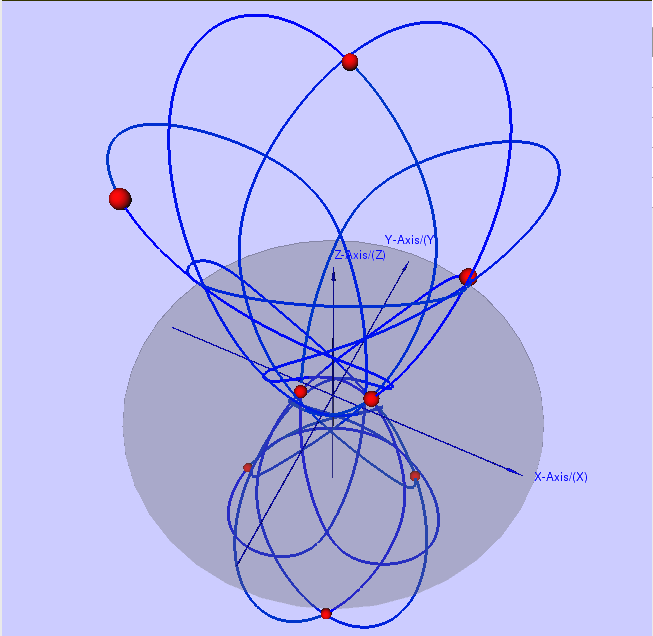} ~~~
\includegraphics{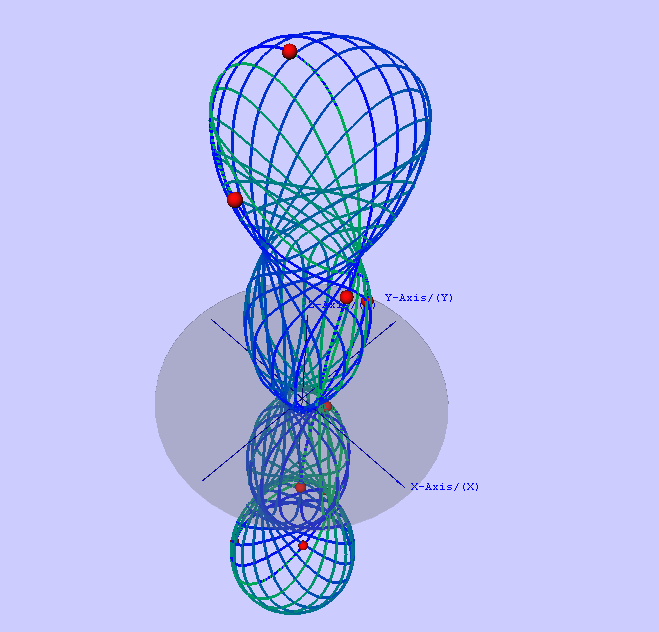}}
\end{center}
\caption{ Vertical Lyapunov families. Top-Left: a $3:2$-resonant Hip-Hop orbit
along $V_{1}$ for $n=4$. Top-Right: a $9:10$-resonant Hip-Hop orbit along
$V_{1}$ for $n=4$. Center-Left: a $5:3$-resonant Hip-Hop along $V_{1}$ for
$n=6$. Center-Right: a $7:4$-resonant Hip-Hop along $V_{1}$ for $n=8$.
Bottom-Left: an $11:5$ resonance along $V_{3}$ for $n=9$. Bottom-Right: a
$17:5$ resonance along $V_{3}$ for $n=9$. }%
\label{fig07}%
\end{figure}
\clearpage
\begin{figure}[h]
\par
\begin{center}
\resizebox{15.2cm}{!}{
\includegraphics{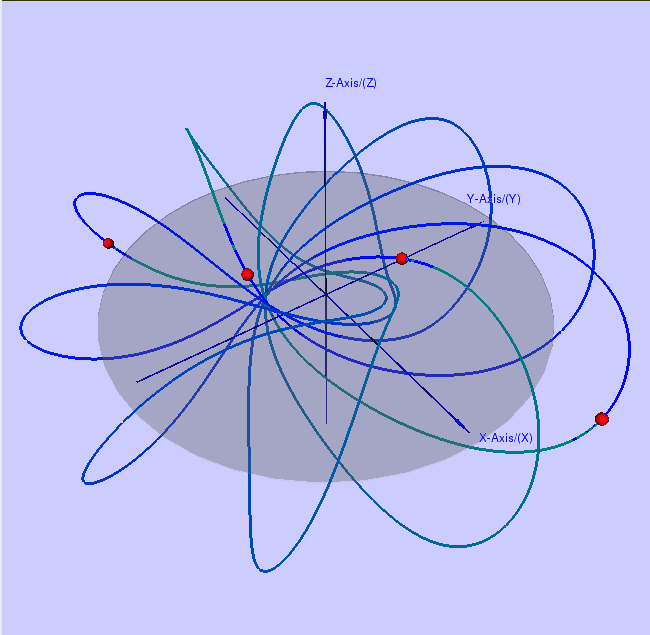} ~~~
\includegraphics{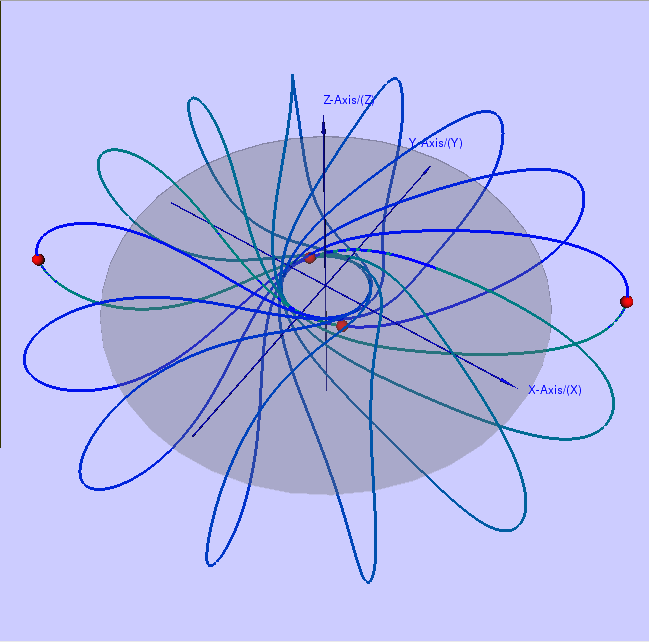}} \vskip0.15cm
\resizebox{15.2cm}{!}{
\includegraphics{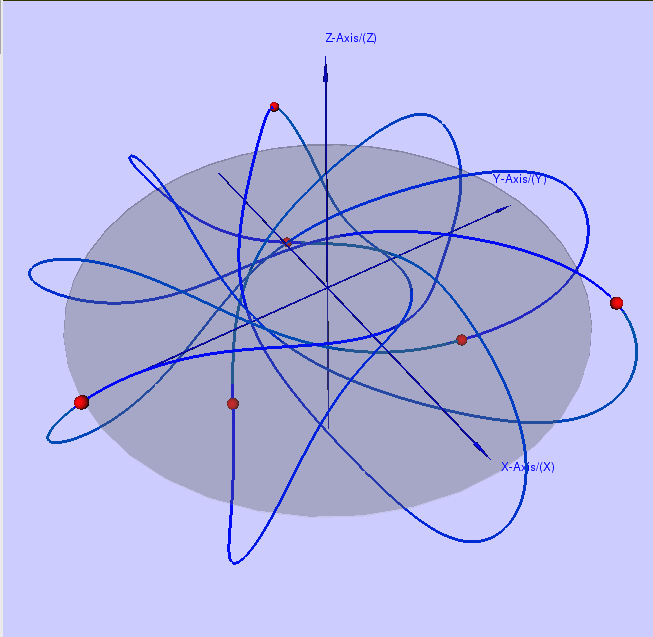} ~~~
\includegraphics{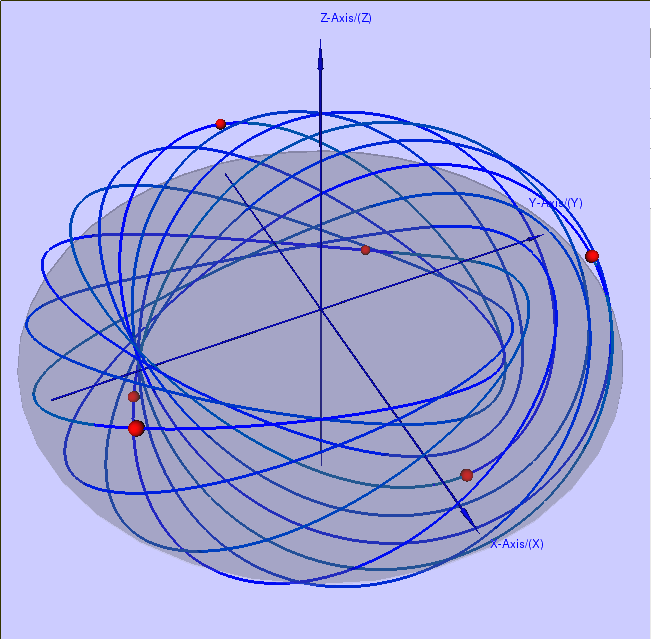}} \vskip0.15cm
\resizebox{15.2cm}{!}{
\includegraphics{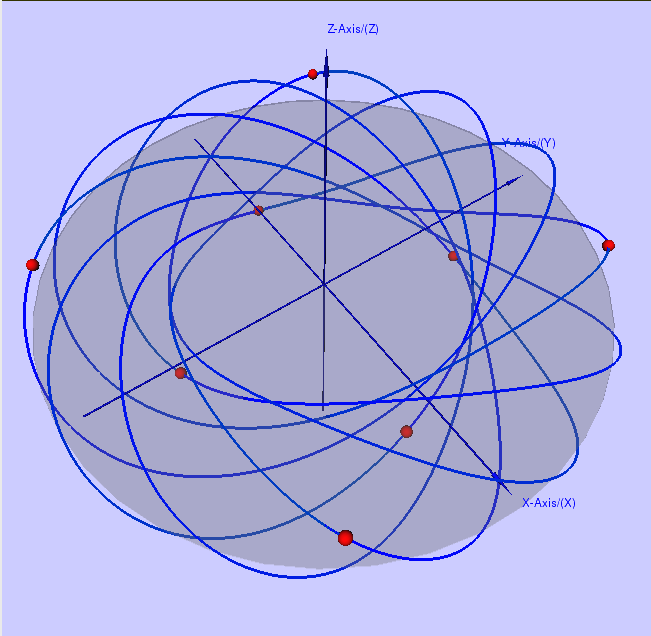} ~~~
\includegraphics{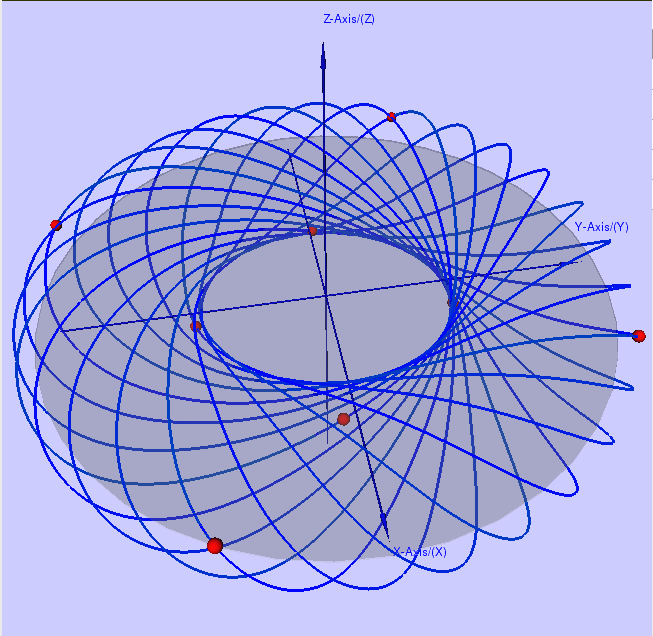}}
\end{center}
\caption{ Resonant Axial orbits with $k=n/2$. Top-Left: a $7:10$ resonant
Axial orbit for $n=4$. Top-Right: a $9:14$ resonant Axial orbit for $n=4$.
Center-Left: a $5:9$ resonant Axial orbit for $n=6$. Center-Right: an $11:15$
resonant Axial orbit for $n=6$. Bottom-Left: a $7:12$ resonant orbit for
$n=8$. Bottom-Right: a $15:28$ resonant Axial orbit for $n=8$. }%
\label{fig08}%
\end{figure}
\clearpage
\begin{figure}[h]
\par
\begin{center}
\resizebox{15.2cm}{!}{
\includegraphics{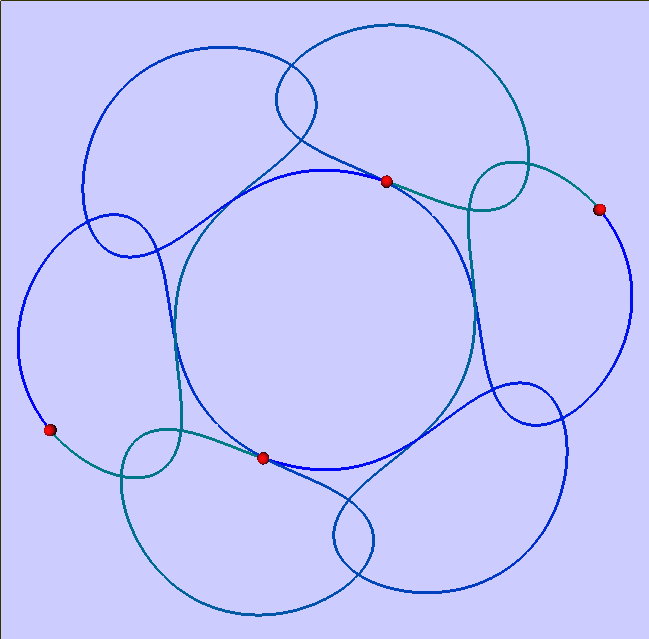} ~~~
\includegraphics{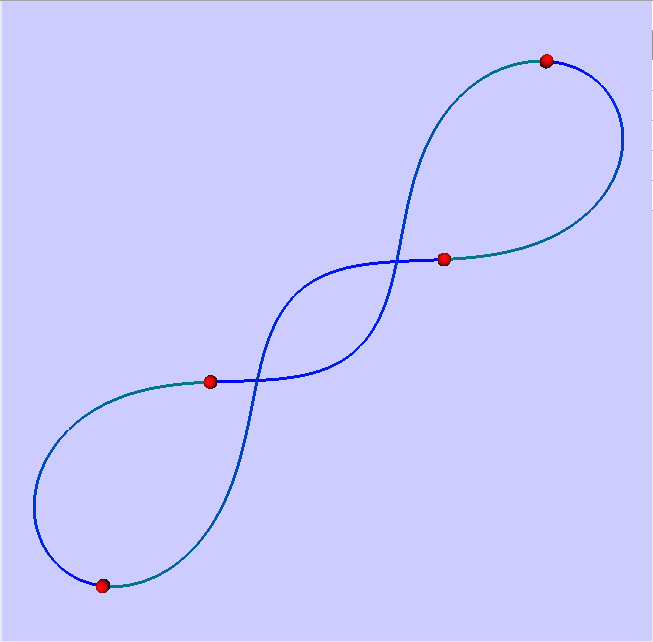}} \vskip0.15cm
\resizebox{15.2cm}{!}{
\includegraphics{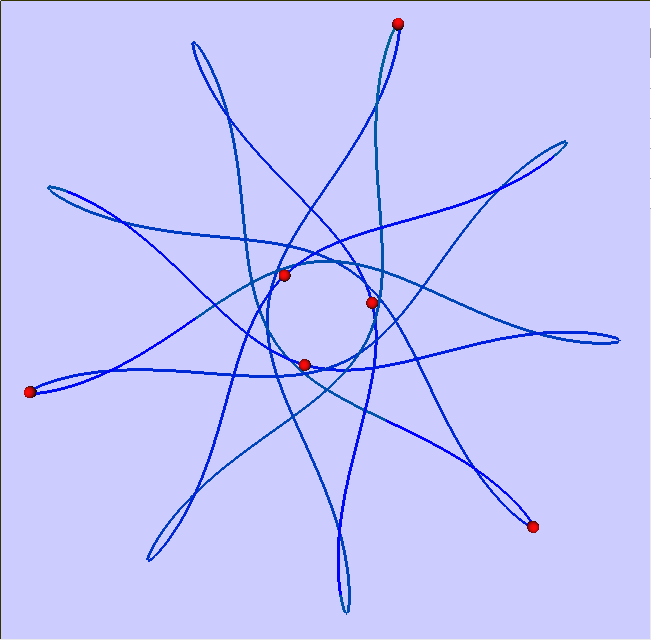} ~~~
\includegraphics{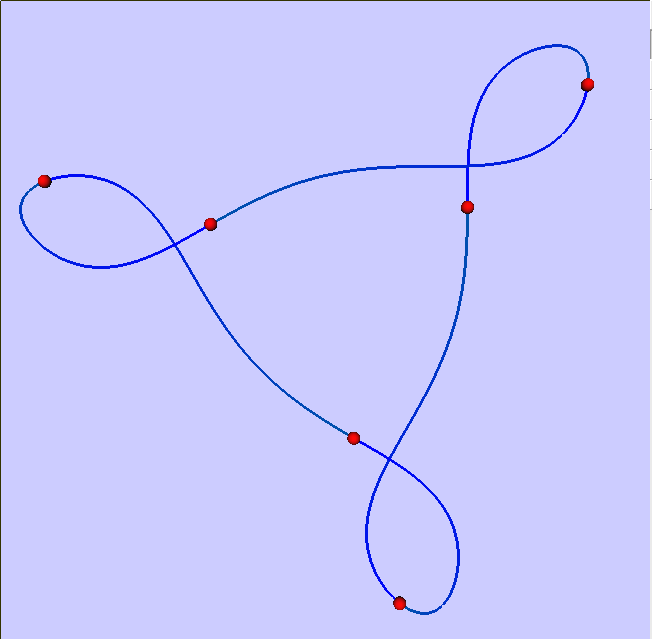}} \vskip0.15cm
\resizebox{15.2cm}{!}{
\includegraphics{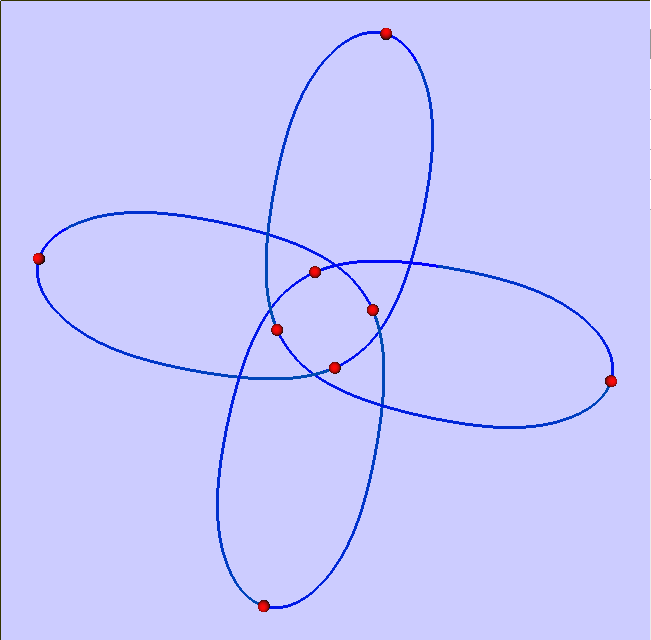} ~~~
\includegraphics{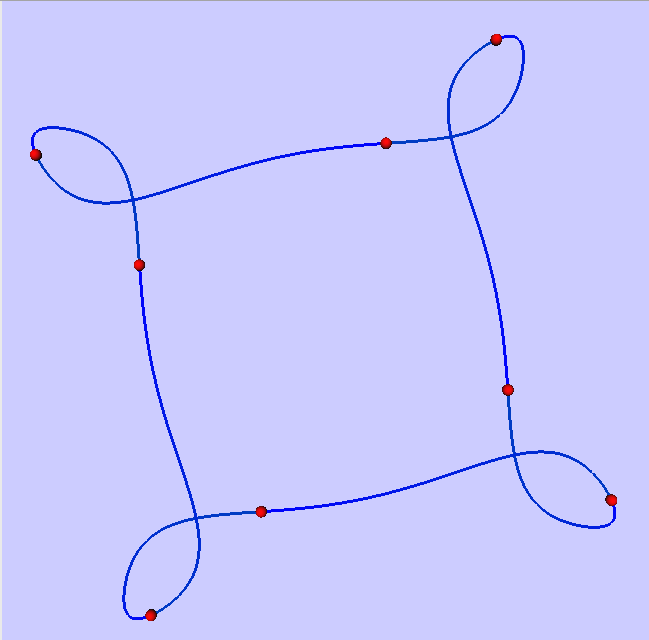}}
\end{center}
\caption{ Unchained polygons for $k=n/2$. Top-Left: a $1:6$ resonant orbit for
$n=4$. Top-Right: a $1:2$ resonant orbit for $n=4$. Center-Left: a $5:9$
resonant orbit for $n=6$. Center-Right: a $1:3$ resonant orbit for $n=6$.
Bottom-Left: a $3:4$ resonant orbit for $n=8$. Bottom-Right: a $1:4$ resonant
orbit for $n=8$. }%
\label{fig09}%
\end{figure}
\clearpage

\section{Other configurations}

\subsection{The Maxwell configuration}

The choreographies in the preceding sections are unstable, in part because
they arise directly or indirectly from an unstable relative equilibrium. To
determine stable solutions it is helpful to consider orbits that emanate from
a stable relative equilibrium. The polygonal equilibrium is never stable;
about half of its eigenvalues are stable and about half are unstable. For this
reason we now consider the Maxwell configuration, consisting of an $n$-polygon
with an additional massive body at the center, which is known to be stable
when $n\geq7$. Specifically, the central body has mass $m_{0}=\mu$, and the
other $n$ bodies have equal mass $m_{j}=1$ for $j\in\{1,...,n\}$. Let
$(u_{j},z_{j})\in\mathbb{C}\times\mathbb{R}$ be the position of body
$j\in\{0,1,...,n\}$. The Newton equations of motion for the $n+1$ bodies in
rotating coordinates
\[
q_{j}(t)=(e^{i\sqrt{\omega}t}u_{j}(t),z_{j}(t))
\]
have an equilibrium with $\left(  u_{0},z_{0}\right)  =\left(  0,0\right)  $
and $\left(  u_{j},z_{j}\right)  =\left(  e^{ij\zeta},0\right)  $ for
$j\in\{1,...,n\}$, where $\zeta=2\pi/n$ and%
\[
\omega=\mu+s_{1}\text{.}%
\]
This well-known Maxwell configuration reduces to the polygonal relative
equilibrium when $\mu=0$.

For $n\geq7$ all planar eigenvalues are imaginary, and produce Planar Lyapunov
families. The $n+1$ spatial eigenvalues include $0$ (due to symmetries),
$i\sqrt{\mu+n}$ for $k=n$, and%
\[
i\sqrt{\mu+s_{k}},\qquad k=1,~\cdots~,n-1\text{.}%
\]
The frequency $\sqrt{\mu+n}$ produces the Vertical Lyapunov family, which
corresponds to the oscillatory ring in \cite{MeSc93}. For $k=n/2$, with $n$
even, we obtain a Hip-Hop family \cite{MeSc93}. For the Maxwell configuration
we say that a Lyapunov orbit is $\ell:m$ resonant when its period satisfies
\[
T_{\ell:m}=\frac{2\pi}{\sqrt{\mu+s_{1}}}\frac{\ell}{m},
\]
where $\ell$ and $m$ are relatively prime such that $k\ell-m\in n\mathbb{Z}$.
For an $\ell:m$ resonant Lyapunov orbit the $n$ bodies of equal mass follow
the same path as in Theorem \ref{proposition}.

To illustrate our numerical computations we chose the first stable case for
$\mu=200$, namely $n=7$. We also consider the case $n=8$ with $\mu=300$. We
computed many families for $n=7$ and $n=8$ and we present only a few planar
resonant orbits in Figure~\ref{fig11} and spatial resonant orbits in
Figure~\ref{fig12}.

\subsection{A triangular configuration}

Here we present some families of periodic solutions that emanate from the
\textquotedblleft triangular\textquotedblright\ equilibrium shown in the
top-left panel of Figure~\ref{fig12}, with $9$ bodies of equal mass. Periodic
solutions that emanate from the triangular equilibrium have been determined
with the same numerical scheme used throughout this paper. However, a detailed
description of these results is outside the scope of the current paper, whose
aim is the continuation of solutions with symmetries that produce choreographies.

The triangular equilibrium can been reached by following one of the families
of spatial periodic orbits that bifurcate from the polygonal relative
equilibrium for $n=9$. These spatial solutions have the symmetry%
\[
u_{j}(t)=\bar{u}_{n-j}(-t)\text{.}%
\]
Actually, in addition to the Vertical Lyapunov families that produce
choreographies from the polygonal configuration, we have also determined these
solutions, which do not produce choreographies. To the best of our knowledge,
the existence of these families has not been established before.

\vskip0.25cm \textbf{Acknowledgements.} We would like to thank R. Montgomery,
J. Montaldi, D. Ayala and L. Garc\'{\i}a-Naranjo for many interesting
discussions. We also acknowledge the assistance of Ramiro Chavez Tovar with
the preparation of figures and animations.
\begin{figure}[h]
\par
\begin{center}
\resizebox{15.1cm}{!}{
\includegraphics{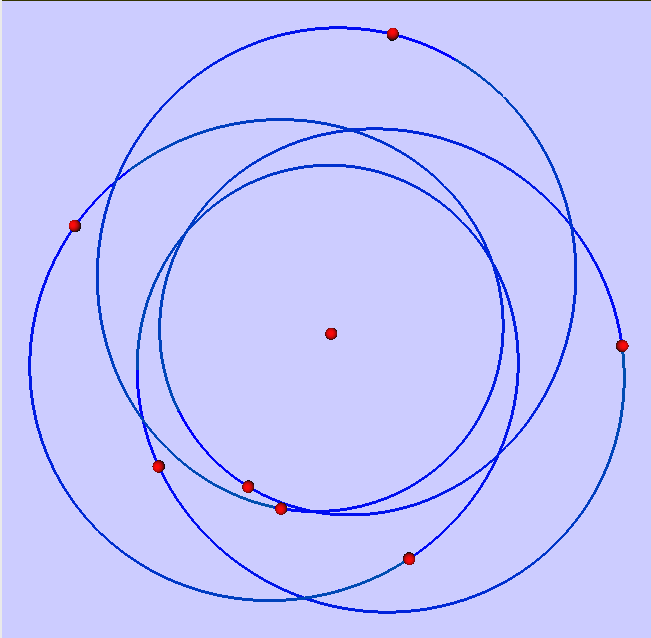} ~~~
\includegraphics{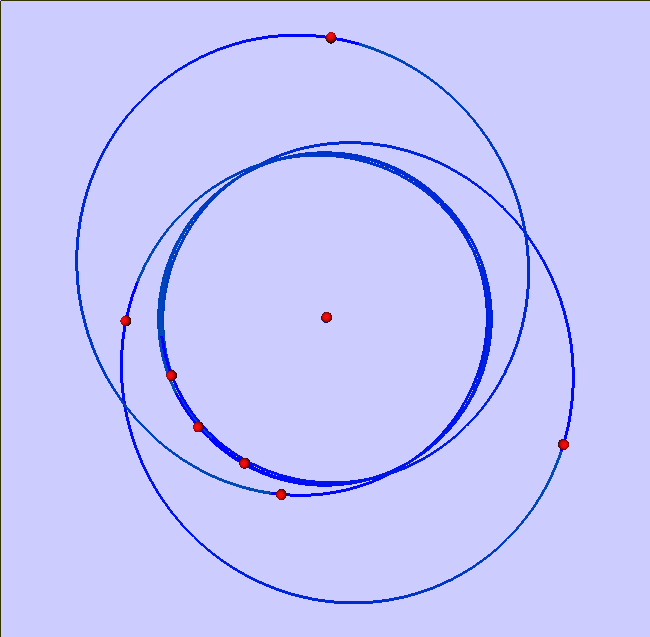}} \vskip0.15cm
\resizebox{15.1cm}{!}{
\includegraphics{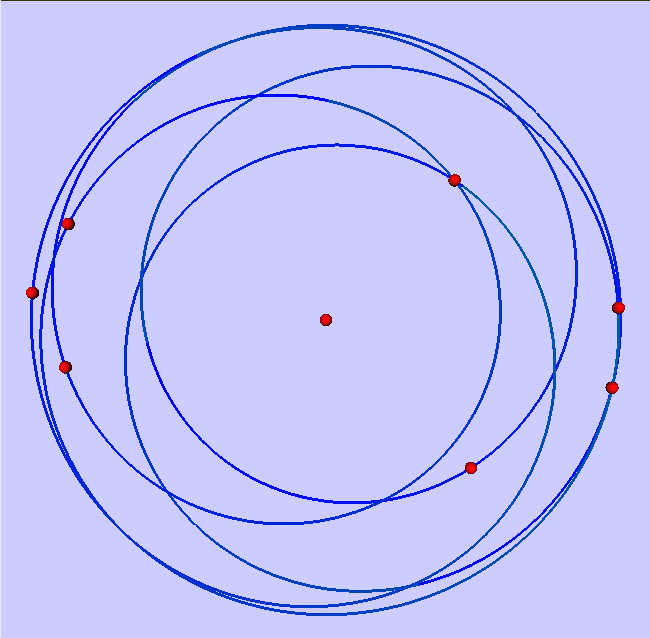} ~~~
\includegraphics{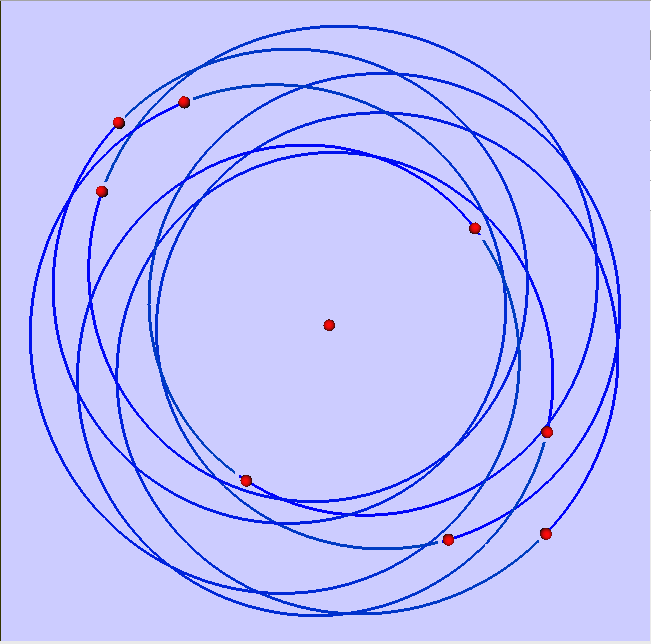}} \vskip0.15cm
\resizebox{15.1cm}{!}{
\includegraphics{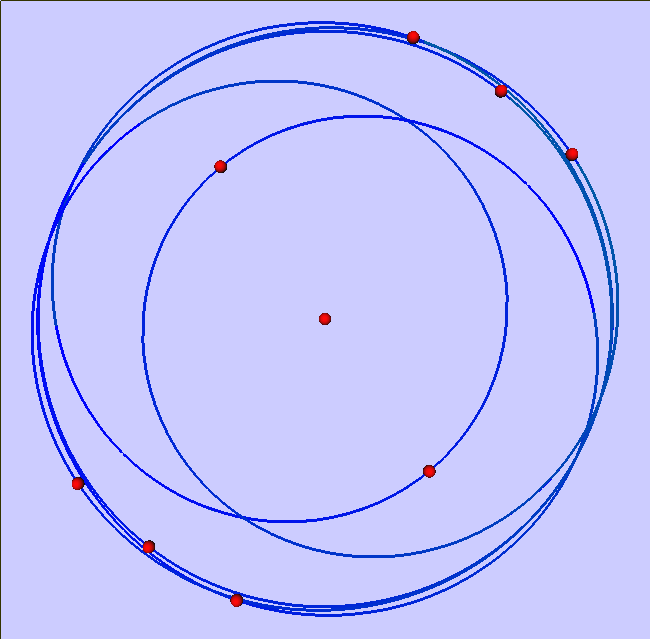} ~~~
\includegraphics{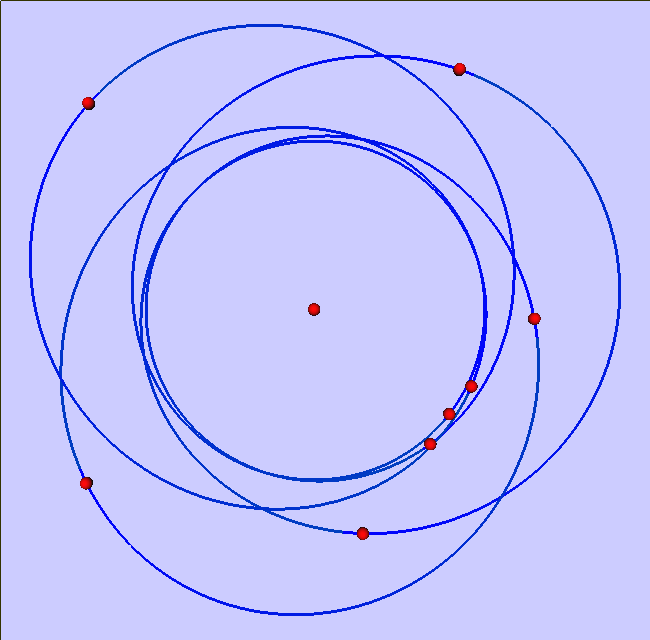}}
\end{center}
\caption{ Top-Left: a $4:3$ resonant orbit for $7+1$ bodies and $k=6$.
Top-Right: a $5:2$ resonant orbit for $7+1$ bodies and $k=6$. Center-Left: a
$5:3$ resonant orbit for $7+1$ bodies and $k=2$. Center-Right: a $7:3$
resonant orbit for $8+1$ bodies for $k=8$. Bottom-Left: a $5:2$ resonant orbit
for $8+1$ bodies and $k=2$. Bottom-Right: a $5:3$ resonant orbit for $8+1$
bodies and $k=7$. }%
\label{fig10}%
\end{figure}
\clearpage
\begin{figure}[h]
\par
\begin{center}
\resizebox{15.1cm}{!}{
\includegraphics{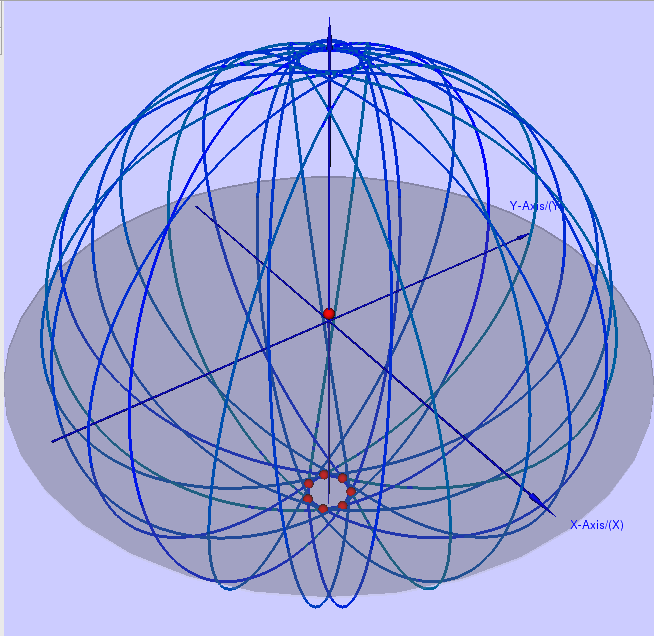} ~~~
\includegraphics{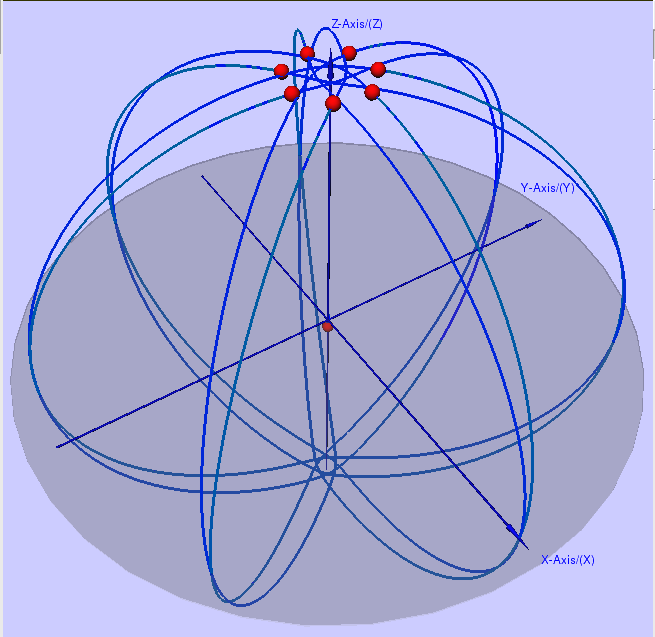}} \vskip0.15cm
\resizebox{15.1cm}{!}{
\includegraphics{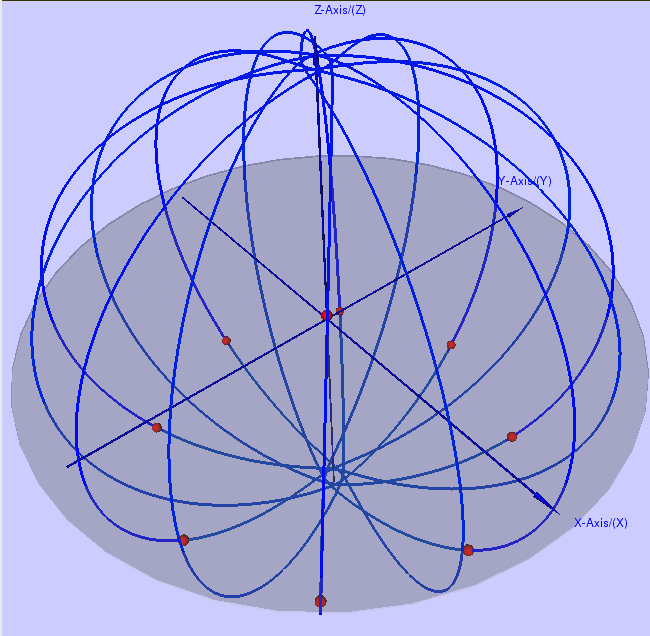} ~~~
\includegraphics{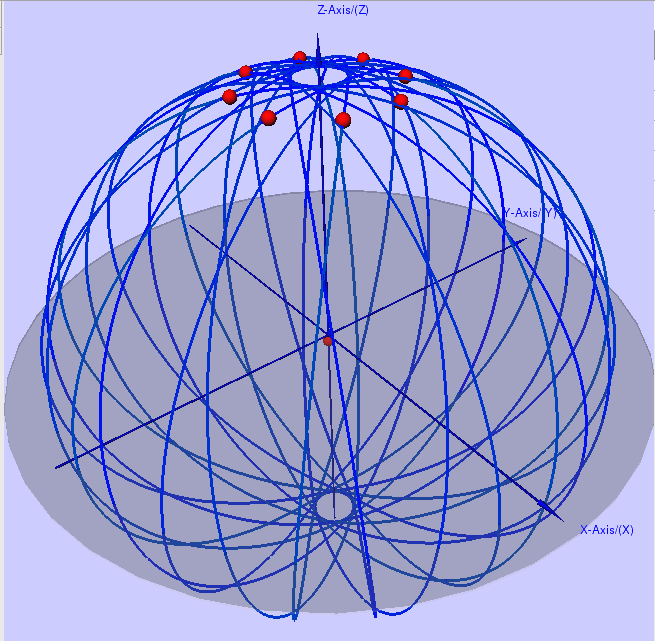}} \vskip0.15cm
\resizebox{15.1cm}{!}{
\includegraphics{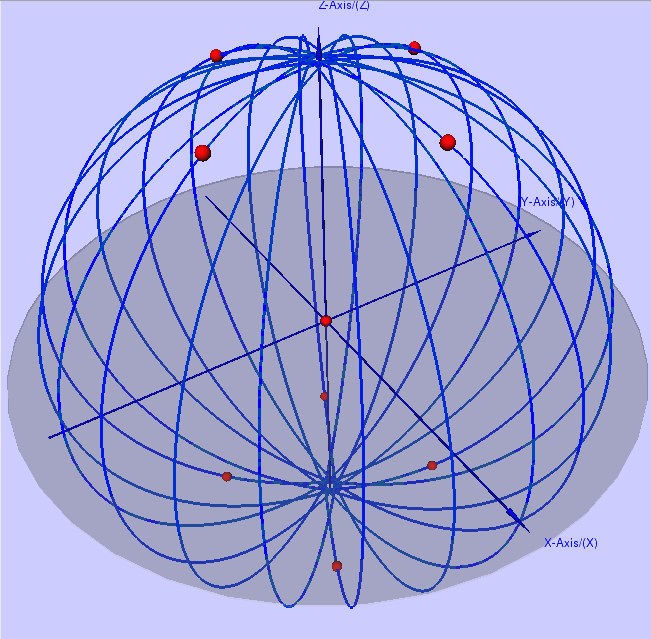} ~~~
\includegraphics{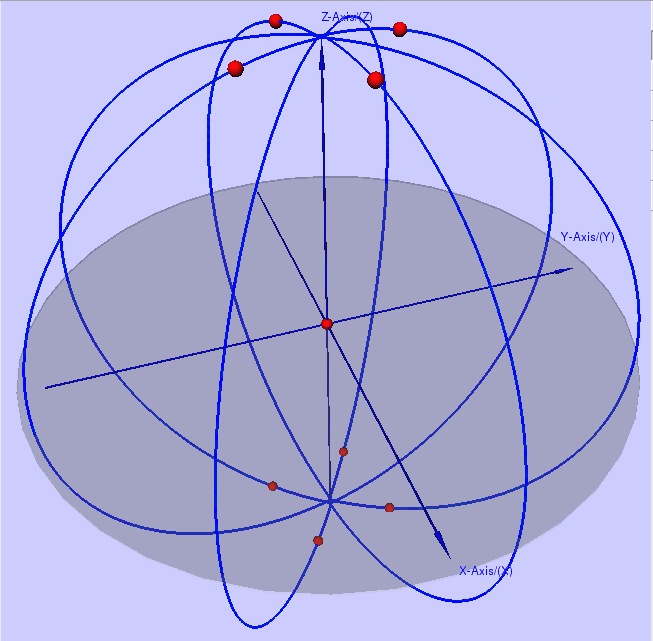}}
\end{center}
\caption{ Top-Left: a $15:14$ resonant orbit for $7+1$ bodies and $k=7$.
Top-Right: an $8:7$ resonant orbit for $7+1$ bodies and $k=7$. Center-Left: a
$9:8$ resonant orbit for $8+1$ bodies and $k=8$. Center-Right:a $17:16$
resonant orbit for $8+1$ bodies and $k=8$. Bottom-Left: a $13:12$ resonant
orbit for $8+1$ bodies and $k=4$. Bottom-Right: a $15:12$ resonant orbit for
$8+1$ bodies and $k=4$. }%
\label{fig11}%
\end{figure}
\clearpage
\begin{figure}[h]
\par
\begin{center}
\resizebox{15.2cm}{!}{
\includegraphics{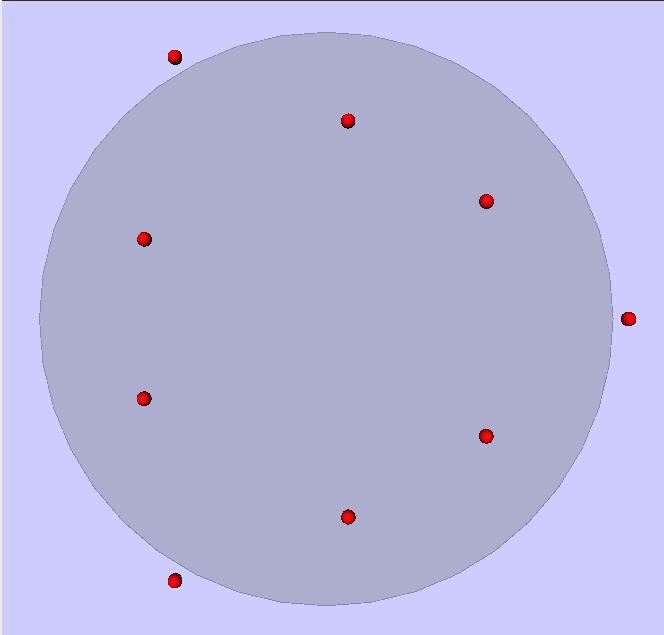} ~~~
\includegraphics{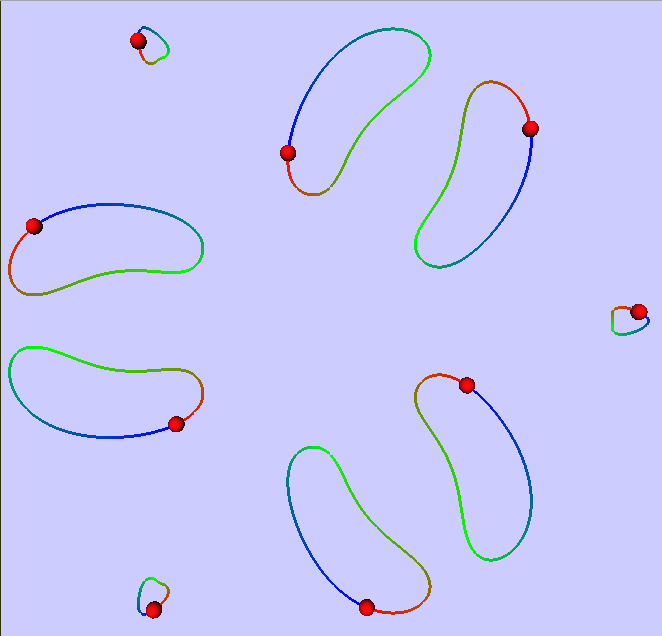}} \vskip0.15cm \resizebox{15.2cm}{!}{
\includegraphics{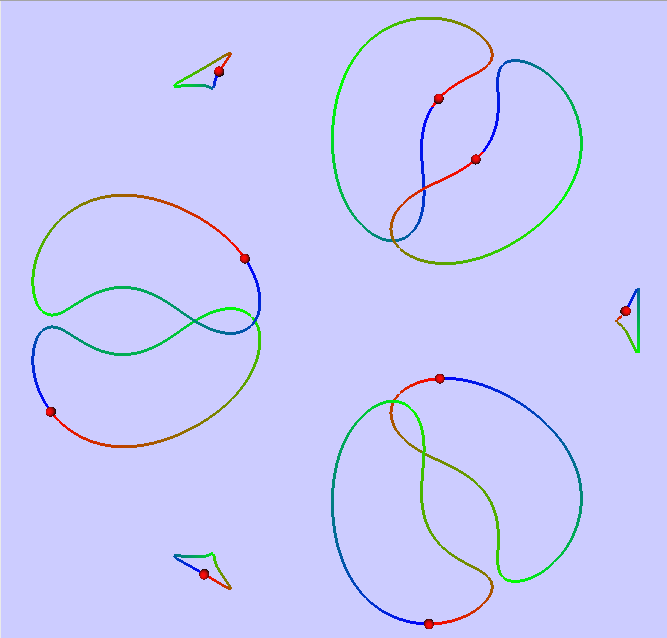} ~~~
\includegraphics{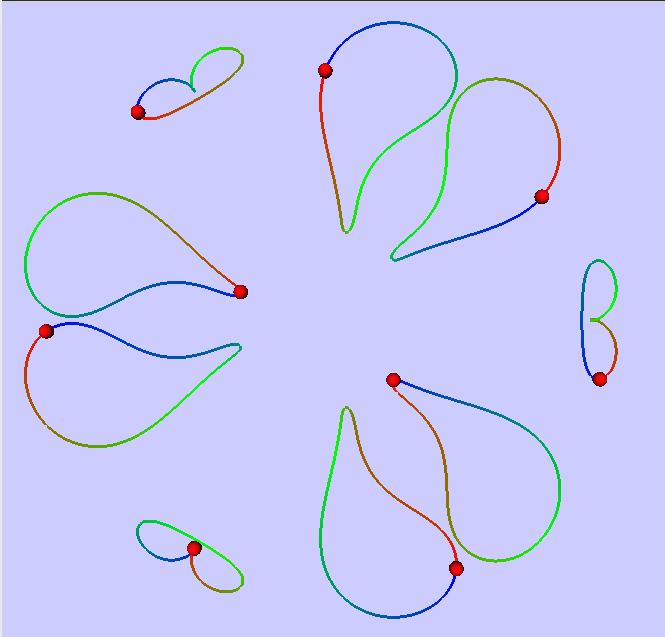}} \vskip0.15cm \resizebox{15.2cm}{!}{
\includegraphics{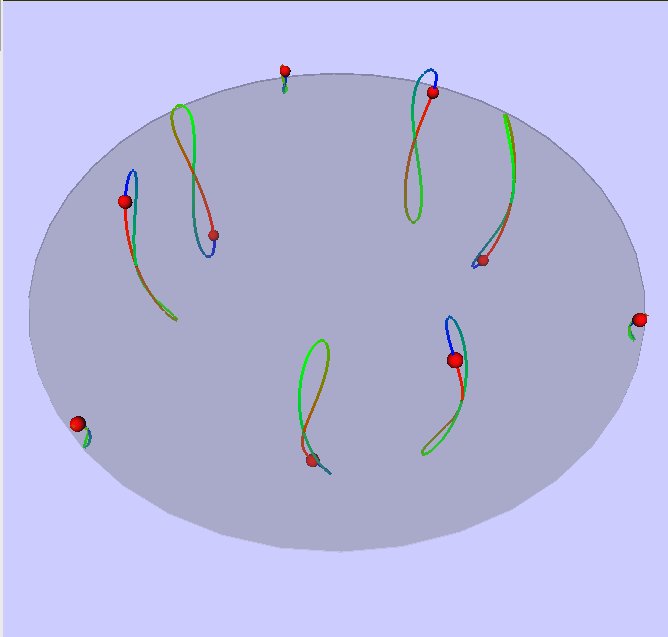} ~~~
\includegraphics{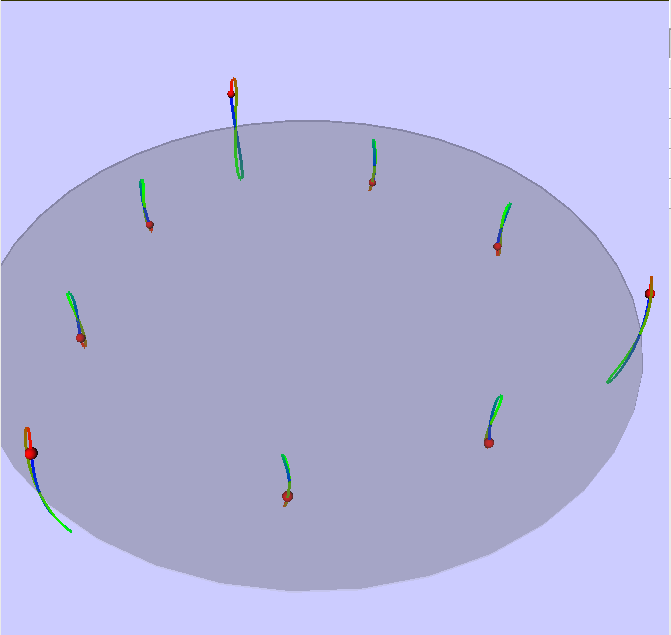}}
\end{center}
\caption{ Some Lyapunov families with different symmetries for $n=9$.
Top-Left: another equilibrium of the 9-body problem. Top-Right: an orbit along
a bifurcating Planar family. Center-Left: an orbit along another bifurcating
Planar family. Center-Right: an orbit along yet another bifurcating Planar
family. Bottom-Left: an orbit along a bifurcating spatial family.
Bottom-Right: an orbit along another bifurcating spatial family. }%
\label{fig12}%
\end{figure}
\clearpage


\end{document}